\documentclass[openany, amssymb, psamsfonts]{amsart}
\usepackage{asymptote}
\usepackage{amssymb}
\usepackage{mathtools}
\usepackage{mathrsfs,comment}
\usepackage[usenames,dvipsnames]{color}
\usepackage[normalem]{ulem}
\usepackage{url}
\usepackage[all,arc,2cell]{xy}
\usepackage{tikz-network}
\usepackage{tikz}
\usetikzlibrary{quotes}
\usetikzlibrary {graphs}
\usetikzlibrary{quotes}
\usepackage{lipsum}
\usepackage{wrapfig}
\usepackage{hyperref}

\begin{asydef}
size(3.5cm);
pen sh = paleyellow;
pair A = dir(150), B = dir(30), C = dir(270), X = -2*A, Y = -2*B, Z = -2*C;
real r = abs(A-B);
pen venn1 = grey+fontsize(8pt);
pen venn2 = fontsize(9pt);
void makeVenn(string s) {
  draw(circle(A,r), black+1.5); draw(circle(B,r), black+1.5); draw(circle(C,r), black+1.5);
  label(rotate(60)*"$N(u)$", (1+3**0.5)*A, A, venn1);
  label(rotate(-39)*"$N(v)$", (1+3**0.5)*B, B, venn1);
  label("$N(w)$", (1+3**0.5)*C, C, venn1);
  label("\textbf{" + s + "}", (0,-4), fontsize(10pt)+blue);
}
void fillRu(string s) { fill(arc(C,A,Y)--arc(A,Y,Z,CW)--arc(B,Z,A)--cycle, sh); label(s, 2*A, venn2); }
void fillRv(string s) { fill(arc(A,B,Z)--arc(B,Z,X,CW)--arc(C,X,B)--cycle, sh); label(s, 2*B, venn2); }
void fillRw(string s) { fill(arc(B,C,X)--arc(C,X,Y,CW)--arc(A,Y,C)--cycle, sh); label(s, 2*C, venn2); }
void fillRuv(string s) { fill(arc(C,A,B,CW)--arc(A,B,Z)--arc(B,Z,A)--cycle, sh); label(s, -1.3*C, venn2); }
void fillRvw(string s) { fill(arc(A,B,C,CW)--arc(B,C,X)--arc(C,X,B)--cycle, sh); label(s, -1.3*A, venn2); }
void fillRuw(string s) { fill(arc(B,C,A,CW)--arc(C,A,Y)--arc(A,Y,C)--cycle, sh); label(s, -1.3*B, venn2); }
void fillRuvw(string s) { fill(arc(A,C,B)--arc(C,B,A)--arc(B,A,C)--cycle, sh); label(s, (0,0), venn2); }
\end{asydef}

\usepackage{graphicx}
\UseAllTwocells
\usepackage{enumerate}

\usepackage{wrapfig}
\hypersetup{%
  bookmarksnumbered=true,%
  bookmarks=true,%
  colorlinks=true,%
  linkcolor=blue,%
  citecolor=blue,%
  filecolor=blue,%
  menucolor=blue,%
  pagecolor=blue,%
  urlcolor=blue,%
  pdfnewwindow=true,%
  pdfstartview=FitBH}

\newcommand{\maincase}[1]{\noindent\textbf{#1}}
\newcommand{\mycomment}[1]{}

\def\makeautorefname#1#2{\expandafter\def\csname#1autorefname\endcsname{#2}}
\def\equationautorefname~#1\null{(#1)\null}
\makeautorefname{footnote}{footnote}%
\makeautorefname{item}{item}%
\makeautorefname{figure}{Figure}%
\makeautorefname{table}{Table}%
\makeautorefname{part}{Part}%
\makeautorefname{appendix}{Appendix}%
\makeautorefname{chapter}{Chapter}%
\makeautorefname{section}{Section}%
\makeautorefname{subsection}{Section}%
\makeautorefname{subsubsection}{Section}%
\makeautorefname{theorem}{Theorem}%
\makeautorefname{thm}{Theorem}%
\makeautorefname{cor}{Corollary}%
\makeautorefname{lem}{Lemma}%
\makeautorefname{prop}{Proposition}%
\makeautorefname{pro}{Property}
\makeautorefname{conj}{Conjecture}%
\makeautorefname{defn}{Definition}%
\makeautorefname{notn}{Notation}
\makeautorefname{notns}{Notations}
\makeautorefname{rem}{Remark}%
\makeautorefname{quest}{Question}%
\makeautorefname{exmp}{Example}%
\makeautorefname{ax}{Axiom}%
\makeautorefname{claim}{Claim}%
\makeautorefname{ass}{Assumption}%
\makeautorefname{asss}{Assumptions}%
\makeautorefname{con}{Construction}%
\makeautorefname{prob}{Problem}%
\makeautorefname{warn}{Warning}%
\makeautorefname{obs}{Observation}%
\makeautorefname{conv}{Convention}%
\makeautorefname{proof}{Proof}%

\newtheorem{thm}{Theorem}[section]

\newtheorem{lem}{Lemma}[section]

\theoremstyle{definition}
\newtheorem{defn}{Definition}[section]

\newtheorem{notn}{Notation}[section]

\newtheorem{quest}{Question}[section]
\newtheorem{rem}{Remark}[section]

\newtheorem{obs}{Observation}[section]

\makeatletter
\let\c@obs=\c@thm
\let\c@cor=\c@thm
\let\c@prop=\c@thm
\let\c@lem=\c@thm
\let\c@prob=\c@thm
\let\c@con=\c@thm
\let\c@conj=\c@thm
\let\c@defn=\c@thm
\let\c@notn=\c@thm
\let\c@notns=\c@thm
\let\c@exmp=\c@thm
\let\c@ax=\c@thm
\let\c@pro=\c@thm
\let\c@ass=\c@thm
\let\c@warn=\c@thm
\let\c@rem=\c@thm
\let\c@sch=\c@thm
\let\c@equation\c@thm
\let\c@quest=\c@thm
\numberwithin{equation}{section}
\makeatother

\bibliographystyle{plain}

\newcommand{\FF}{\mathcal{F}}

\title{Graph Reconstruction from Connected Triples}
\author{Yaxin Qi}
\date{July 2023}

\begin{document}

\maketitle

\begin{abstract}
    The problem of graph reconstruction has been studied in its various forms over the years. In particular, the \emph{Reconstruction Conjecture}, proposed by Ulam and Kelly in 1942, has attracted much research attention and yet remains one of the foremost unsolved problems in graph theory. Recently, Bastide, Cook, Erickson, Groenland, Kreveld, Mannens, and Vermeulen proposed a new model of partial information, where we are given the set of connected triples $T_3$, which is the set of
    3-subsets of the vertex set that induce connected subgraphs. They proved that reconstruction is unique within the class of triangle-free graphs, 2-connected outerplanar graphs, and maximal planar graphs. They also showed that almost every graph can be uniquely reconstructed from their connected triples. However, little is known about other classes of non-triangle-free graphs within which reconstruction can occur uniquely, nor do we understand what kind of graphs can be uniquely reconstructed from their connected triples without assuming anything about the classes of graphs to which they belong.

    The main result of this paper is a complete characterization of all graphs that can be uniquely reconstructed from their connected triples $T_3$. We also show that reconstruction from $T_3$ is unique within the class of regular planar graphs, 5-connected planar graphs, certain strongly regular graphs, and complete multi-partite graphs, whereas it is not unique for the class of $k$-connected planar graphs with $k \leq 4$, Eulerian graphs, or Hamiltonian graphs.

\end{abstract}

\section{Introduction}
  The problem of graph reconstruction that asks how much of an unknown underlying graph of interest can be uniquely determined by specific types of information has been studied in various forms over the years \cite{N17} \cite{Cor09}. In particular, the \emph{Reconstruction Conjecture} \cite{SM60} \cite{K42}, proposed by Ulam and Kelly in 1942, has attracted much research attention and yet remains one of the foremost unsolved problems in graph theory \cite{Sca16}, \cite{W79} \cite{Hem77}, \cite{F74}. Recently, Bastide, Cook, Erickson, Groenland, Kreveld, Mannens, and Vermeulen \cite{BCEGKMV23} proposed a new model of partial information, where we are given the set of connected triples $T_3$, which is the set of 3-subsets of the vertex set that induce connected subgraphs. More generally, Bastide et al. gave the following definition.

\begin{defn}
    \cite{BCEGKMV23} For $k \geq 2$ and a finite, simple, connected (labeled) graph $G$, the \emph{set of connected $k$-sets} of $G$, which we will denote as $T_k(G)$, is defined to be \[ T_k(G) \coloneqq \{X \subseteq V(G) \mid |X| = k \text{ and the subgraph of $G$ induced by $X$ is connected}\}. \]
    In particular, when $k = 3$, we will call $T_3(G)$ \emph{the set of connected triples} of $G$.
\end{defn}
\begin{notn}
    We will use $N(v)$ to denote the set of neighbors of vertex $v \in V(G)$ whereas $N[v]$ denotes $N(v) \cup \{v\}$. We refer to the subgraph of $G$ induced by $X \subseteq V(G)$ as $G[X]$. In particular, we refer to $G[V(G) \setminus \{v\}]$ as $G-v$. Sometimes we drop $G$ if the graph in question is clear or if we don't want to emphasize the graph that gave rise to the set of connected triples.
\end{notn}

\begin{rem}
     When $k =2$, we get back our edge set and thus obtain the entire graph.
\end{rem}

As always, we are interested in when reconstruction of the underlying (labeled) graph is unique. The observation below establishes a connection among the connected $k$-sets for different values of $k$ and shows that the set of connected triples gives the most information about a graph among all the other non-trivial (i.e. $k \neq 2$) connected $k$-sets.

\begin{obs}
\cite{BCEGKMV23} For $k' \geq k \geq 2$, the connected $k-$sets of a graph are determined by the connected $k'$-sets. In particular, a $(k+1)$-set $X = \{x_1, ...,x_{k+1}\} \subseteq V(G)$ induces a connected subgraph of $G$ if and only if both $G[X \setminus \{y\}]$ and $G[X \setminus \{z\}]$ are connected for some $y,z \in X$.
\end{obs}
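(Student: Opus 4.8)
The plan is to reduce everything to the displayed equivalence, since the first sentence follows from it by a short induction. Granting that for every $(k+1)$-set $X$ one has $G[X]$ connected if and only if two of its $k$-element subsets induce connected subgraphs, membership in $T_{k+1}(G)$ is a function of $T_k(G)$ alone; iterating, $T_k(G)$ determines $T_{k'}(G)$ for every $k' \ge k$, which is the asserted relationship between the connected $k$-sets and the connected $k'$-sets. Throughout I read the quantifier ``for some $y, z \in X$'' as ranging over \emph{distinct} $y$ and $z$, and I note that $|X| = k+1 \ge 3$ since $k \ge 2$, so every set under consideration has at least two vertices; this is what makes the neighbour arguments below go through.

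For the easier ($\Leftarrow$) direction, suppose $y \ne z$ in $X$ with $G[X \setminus \{y\}]$ and $G[X \setminus \{z\}]$ both connected. I would establish the connectivity of $G[X]$ by attaching the single missing vertex $y$ to the connected subgraph $G[X \setminus \{y\}]$. Since $y \in X \setminus \{z\}$ and $G[X \setminus \{z\}]$ is connected on $k \ge 2$ vertices, $y$ has at least one neighbour $w$ inside $X \setminus \{z\}$; as $w \ne y$, in fact $w \in X \setminus \{y, z\} \subseteq X \setminus \{y\}$. Thus $y$ is adjacent to a vertex of the connected graph $G[X \setminus \{y\}]$, and adjoining $y$ keeps all of $G[X]$ connected.

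For the ($\Rightarrow$) direction, assume $G[X]$ is connected and I must exhibit the two distinct vertices $y, z$. The key tool is the standard fact that a connected graph on at least two vertices has at least two non-cut-vertices: fixing a spanning tree of $G[X]$ and taking two of its leaves produces two distinct vertices whose deletion leaves a spanning tree of the remaining graph, and hence leaves $G[X]$ connected. Choosing $y$ and $z$ to be two such non-cut-vertices makes both $G[X \setminus \{y\}]$ and $G[X \setminus \{z\}]$ connected, as required.

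The only real subtlety --- and hence a point to be careful about rather than a genuine obstacle --- is the existence step in the forward direction: one must produce \emph{two} vertices whose removal preserves connectivity, not just one, which is exactly why the two-leaves-of-a-spanning-tree argument (equivalently, the existence of at least two non-cut-vertices) is invoked. Correspondingly, in the reverse direction the hypothesis genuinely needs the two deleted vertices to be distinct, since a single connected $k$-subset does not force $G[X]$ to be connected (an isolated vertex of $G[X]$ can be the unique vertex whose deletion restores connectivity). With distinctness in hand and $k \ge 2$ guaranteeing that the relevant induced subgraphs have enough vertices to supply neighbours, both directions close and the induction delivers the general statement.
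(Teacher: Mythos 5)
Your proof is correct. The paper itself gives no argument for this observation (it is quoted from Bastide et al.\ without proof), so there is nothing to compare against; your route --- two leaves of a spanning tree for the forward direction, attaching the single missing vertex to an already connected $k$-subset for the reverse, with the explicit caveat that $y$ and $z$ must be distinct --- is the standard and complete one. One small remark: your induction shows that $T_k$ determines $T_{k'}$ for $k' \ge k$, which is the reading consistent with the ``in particular'' clause and with the surrounding claim that $T_3$ carries the most information; the paper's opening sentence has the roles of $k$ and $k'$ transposed relative to that, so you have proved the intended statement rather than the literal one.
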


For this reason, we will focus on reconstruction of graphs from $T_3$ and make the following definition.

\begin{defn}
     A class $\mathcal{C}$ of graphs is $T_3$-reconstructible if all $G_1 \neq G_2 \in \mathcal{C}$ satisfies $T_3(G_1) \neq T_3(G_2)$. In other words, given $T_3(G)$ and the knowledge that the underlying graph $G$ is in $\mathcal{C}$, we are able to uniquely reconstruct $G$.
\end{defn}

\begin{rem}
    Since we are dealing with labeled graphs, we consider a graph unique if no other labeled graph is identical to it. For instance, the path $v_1v_2v_3$ would not be identical to the path $v_1v_3v_2$. Oftentimes the recognition problem of whether a graph $G$ is in a class $\mathcal{C}$ of graphs cannot be solved even from $T_3$. For example, we cannot distinguish a complete graph from an ``almost" complete graph—say a complete graph with an arbitrary edge deleted—because they would both have all $3$-subsets of the vertex set as their connected triples. In the case where the order of the graph is $4$, we cannot even distinguish a complete graph from a cycle.
\end{rem}

Bastide et al. \cite{BCEGKMV23} proved that $T_3$-reconstructible classes of graphs include triangle-free graphs on $n \geq 5$ vertices, outerplanar 2-connected graphs on $n \geq 6$ vertices, and maximal planar graphs on $n \geq 7$ vertices. They also showed that almost every graph can be uniquely reconstructed from their connected triples without assuming additional information. However, little is known about other classes of non-triangle-free graphs that are $T_3$-reconstructible, nor do we understand what kind of graphs can be uniquely reconstructed from their connected triples without assuming anything about the classes of graphs to which they belong.

The main result of this paper is a complete characterization of all graphs that can be uniquely reconstructed from their connected triples. We also show that regular planar graphs, 5-connected planar graphs, certain strongly regular graphs, and complete multi-partite graphs are reconstructible, whereas $k$-connected planar graphs for $k \leq 4$, Eulerian graphs, and Hamiltonian graphs are not.

This paper is organized as follows. In Section 2 we present a few preliminary results not necessarily restricted to reconstruction from connected triples. From Section 3 and onward, we focus exclusively on answering the question of when reconstruction from connected triples is unique. In particular, in Section 3 we prove or disprove the $T_3$-reconstructibility of several families of non-triangle free graphs. In Section 4 we study graphs that can be uniquely reconstructed from their set of connected triples without assuming any additional information, establishing a complete characterization from scratch. Finally in Section 5 we give some future directions.

\section{Preliminary Results}
Generally, two distinct labeled graphs can have the same connected triples. So if we are only given $T_3$ and the order of a graph $|V(G)| = n$, we cannot always expect to uniquely reconstruct the underlying graph even from its connected triples. However, if we are given the order of $G$, its vertex connectivity $\kappa(G)$ can be uniquely reconstructed from $T_k(G)$ for most values of $k$ that are ``not too large." Although it is easier to prove this using Observation 1.4, we prove it in a way that will help set up the proof of the next result regarding complete multipartite graphs.

\begin{rem}
    When we say that ``$T_k(G)$ uniquely determines $\kappa(G)$" in the context of the following theorem, what we really mean, of course, is that if $|V(G)| = n$ and $2 \leq k \leq n - \kappa(G)$ are fixed, then for all graphs $G'$ satisfying $|V(G')| = n$, we have $T_k(G) = T_k(G')$ implies $\kappa(G') = \kappa(G)$.
\end{rem}

\begin{thm}
    If the order of a graph $|V(G)| = n$ is known, then for all $2 \leq k \leq n - \kappa(G)$, where $\kappa(G)$ denotes the vertex connectivity of $G$, we have that $T_k(G)$ uniquely determines $\kappa(G)$.
\end{thm}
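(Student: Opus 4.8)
The plan is to read off $\kappa(G)$ from the connectivity pattern of all sufficiently large induced subgraphs, and then to note that this pattern is already encoded in $T_k(G)$ via the criterion of Observation 1.4.

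First I would isolate the purely graph-theoretic fact that for every integer $m$ with $2 \le m \le n$, the graph $G$ has an $m$-vertex set $W$ with $G[W]$ disconnected if and only if $\kappa(G) \le n - m$. The forward direction is immediate: a disconnected $G[W]$ means $V(G) \setminus W$ is a vertex cut, so $\kappa(G) \le |V(G) \setminus W| = n - m$. For the converse, a minimum cut $S$ with $|S| = \kappa(G) \le n - m$ splits $V(G) \setminus S$ into nonempty sides $A, B$ with no edges between them; since $|A| + |B| = n - \kappa(G) \ge m \ge 2$, I can choose $m$ vertices from $A \cup B$ meeting both sides, and these induce a disconnected subgraph. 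Letting $M(G)$ denote the largest size of a disconnected induced subgraph, this equivalence gives $M(G) = n - \kappa(G)$, i.e. $\kappa(G) = n - M(G)$.

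Second I would show $M(G)$ is determined by $n$ together with $T_k(G)$. Iterating the criterion of Observation 1.4 upward from size $k$, the connectivity of every $(j+1)$-set is determined by the connectivities of its $j$-subsets; hence $T_k(G)$ determines, for every $W$ with $|W| \ge k$, whether $G[W]$ is connected. Thus the truncated quantity $M_{\ge k}(G) := \max\{ m \ge k : G \text{ has a disconnected induced subgraph on } m \text{ vertices}\}$ depends only on $n$ and $T_k(G)$. The hypothesis $k \le n - \kappa(G)$ is precisely the statement that the genuine maximizer, of size $n - \kappa(G)$, already lies in the searched range $m \ge k$, so that $M_{\ge k}(G) = M(G) = n - \kappa(G)$.

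Finally, for the uniqueness reformulation of the preceding Remark, let $G'$ satisfy $|V(G')| = n$ and $T_k(G') = T_k(G)$. Since the connectivity of all vertex sets of size $\ge k$ is a function of the $T_k$-data, the two graphs share the same value of $M_{\ge k}$; in particular $G'$ has a disconnected induced subgraph on $n - \kappa(G) \ge k$ vertices. The main obstacle is that one cannot assume a priori that $G'$ also satisfies $k \le n - \kappa(G')$, so the formula $\kappa = n - M_{\ge k}$ is not immediately available for $G'$. This is resolved exactly by the previous sentence: that large disconnected subgraph forces $\kappa(G') \le \kappa(G) \le n - k$, placing $G'$ in the valid range. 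Applying the second step to $G'$ then yields $\kappa(G') = n - M_{\ge k}(G') = n - M_{\ge k}(G) = \kappa(G)$, as desired.
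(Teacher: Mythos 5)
Your proof is correct, but it takes the route the paper deliberately sidesteps. The paper introduces \emph{gluing sets} and \emph{glued sets} to test, directly from the $k$-set data, whether $G-X$ is connected for each candidate deleted set $X$, and then finds $\kappa(G)$ as the least $|X|$ for which no full-size glued set survives; it does this explicitly ``in a way that will help set up the proof of the next result'' on complete multipartite graphs, where the glued-set machinery is reused. You instead iterate Observation 1.4 upward to recover the connectivity of every induced subgraph on at least $k$ vertices, and read off $\kappa(G) = n - M(G)$ where $M(G)$ is the maximum size of a disconnected induced subgraph. The two arguments rest on the same underlying identity (a set $W$ of size $m$ induces a disconnected subgraph iff $V(G)\setminus W$ is a cutset of size $n-m$), but yours is cleaner in two respects: it avoids the glued-set connectivity criterion, whose ``clear'' equivalence in the paper is stated loosely (a gluing set as defined there need not have connected union), and it explicitly handles the uniqueness subtlety that a graph $G'$ with $T_k(G')=T_k(G)$ is not a priori known to satisfy $k \le n - \kappa(G')$ — you close this by transporting the large disconnected induced subgraph from $G$ to $G'$ first. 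The only cost is that your argument leans entirely on Observation 1.4, which the paper cites without proof (and states with the roles of $k$ and $k'$ apparently transposed, though the ``in particular'' clause you use is the correct direction), and it does not produce the reusable machinery the paper wants for Theorem 2.3.
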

\begin{proof}
    We define a subset $S \subseteq T_k(G)$ to be a \emph{gluing set of $G$} if for all $s_1 \in S$, there exists $s_2 \neq s_1$ such that $s_1 \cap s_2 \neq \emptyset$. We call $g(S) \coloneqq \bigcup_{s \in S}s$ a \emph{glued set of $G$}. It is clear that a graph $H$ with order $m$ is connected if and only if there exists a glued set $g(S)$ of $H$ with cardinality $m$ for some gluing set $S \subseteq T_k(H)$. Indeed, if $H$ were disconnected, then each glued set would be contained in one of the components of $H$. On the other hand, if $H$ were connected, take a maximal glued set, say $g(S)$, and a vertex $v \in g(S)$. For all vertices $u \neq v \in V(G)$, there exists a path from $u$ to $v$. If the path has length at least $k$, then it can be chopped up into overlapping paths on $k$ vertices whose vertex sets all belong to $S$. Otherwise, $u$ and $v$ would exist in a connected subgraph of $G$ with $k$ vertices. Since $S$ gives rise to a maximal glued set, $u$ would be in some set in $S$ and thus in $g(S)$. To uniquely determine $\kappa(G)$, we first drop one vertex at a time and check if all of $v_i \in V(G)$ satisfies that $G-v_i$ has a glued set with order $m-1$. If the answer is yes, we move on to removing two vertices at a time and checking whether all of the corresponding graphs have a glued set with order $m-2$. It is worth noting that when we drop a set of vertices and check for glued sets of the appropriate size, we temporarily delete elements of $T_k(G)$ that contain any of those vertices. We can uniquely determine $\kappa(G)$ by continuing this process and finding the smallest $k$ such that removing $k$ vertices at a time gives a corresponding graph with no glued set of cardinality $n-k$.
\end{proof}

\begin{thm}
    For all complete $n$-partite graphs $K_{r_1,...,r_n}$, where $n \geq 3$ and $r_i \geq 3$ for all $i$, $K_{r_1,...,r_n}$ can be uniquely reconstructed from its connected $k$-sets $T_k$ for all $k \leq \min \{r_1,...,r_n\}$ if we know that it is a complete multi-partite graph.
\end{thm}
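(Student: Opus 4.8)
The plan is to reduce the reconstruction of a complete multipartite graph to the recovery of its vertex partition, since the partition into parts $V_1, \dots, V_n$ determines $G$ completely: two vertices are adjacent exactly when they lie in different parts. Throughout I take the labeled vertex set $V = V(G)$ to be known (as in the standing convention that $|V(G)|$ is fixed); if one prefers, it can be recovered as $\bigcup_{X \in T_k} X$, because every vertex lies in some connected $k$-set, obtained by pairing it with $k-1$ other vertices including at least one from a different part, which is possible since $n \ge 2$ and the total number of vertices exceeds $k$.

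The crucial observation is a clean description of which $k$-sets are connected. For a complete multipartite $G$ and any $X \subseteq V$ with $|X| = k \ge 2$, the induced subgraph $G[X]$ is connected if and only if $X$ is \emph{not} contained in a single part: if $X \subseteq V_i$ then $G[X]$ has no edges and is disconnected, while if $X$ meets at least two parts then any two vertices $u,v \in X$ are adjacent when they lie in different parts, and otherwise share a common neighbour $w \in X$ drawn from another part that $X$ meets, so $G[X]$ is connected. Hence the $k$-sets \emph{missing} from $T_k$, namely $M \coloneqq \binom{V}{k} \setminus T_k$, are precisely the $k$-subsets lying inside a single part, and $M$ is computable from $T_k$ once $V$ is known.

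I would then recover the partition through the relation on $V$ defined by $u \approx v$ if and only if some $X \in M$ contains both $u$ and $v$, and argue that $\approx$ is exactly the relation ``$u$ and $v$ lie in the same part,'' so that its classes are the parts. If $u,v$ lie in different parts, every $k$-set containing both meets two parts, hence lies in $T_k$ and not in $M$, giving $u \not\approx v$; this direction uses nothing about the sizes. Conversely, if $u,v \in V_i$, then since $k \le \min_j r_j \le r_i = |V_i|$ I can extend $\{u,v\}$ to a $k$-subset $X \subseteq V_i$, which lies in $M$, so $u \approx v$. As $\approx$ coincides with an equivalence relation, its classes are well defined and reading them off returns $V_1, \dots, V_n$, and hence $G$, uniquely.

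The only delicate point, and where the hypothesis is genuinely used, is the size threshold: the inequality $k \le \min_j r_j$ is exactly what guarantees that each part is large enough to contain a monochromatic $k$-set witnessing every same-part pair; were some $r_i < k$, the vertices of that part would leave no trace in $M$ and could not be grouped. The conditions $n \ge 3$ and $r_i \ge 3$ simply keep us clear of the small-order coincidences noted earlier (for instance $K_4 = K_{1,1,1,1}$ and $C_4 = K_{2,2}$ sharing all triples), but the argument itself needs only $2 \le k \le \min_j r_j$. Finally, to match the framework of the previous theorem, I would remark that the test ``$G[X]$ connected'' can be phrased through glued sets: a part is a maximal vertex set admitting no glued set of full cardinality, so the same partition is extractable using only the gluing/glued-set language developed there.
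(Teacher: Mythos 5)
Your proposal is correct and follows essentially the same route as the paper: both hinge on the observation that the disconnected $k$-sets are exactly those contained in a single part, and both recover the parts by merging overlapping disconnected $k$-sets (the paper via the glued sets of $\overline{T_k}$, you via the equivalence relation $\approx$, which amounts to the same thing, as you note yourself at the end). Your write-up is more explicit about where $k \le \min_j r_j$ is used, but the underlying argument is identical.
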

\begin{proof}
     Observe that a $k$-subset of $V(K_{r_1,...,r_n})$ does not induce a connected subgraph if and only if it is contained within one partite of $K_{r_1,...,r_n}$. Take the complement $\overline{T_k}$ of $T_k$, where \[ \overline{T_k(G)} \coloneqq \{X \subseteq V(G) \mid |X| = k \text{ and } G[X] \text{ is not connected}\}, \] and look at the glued sets of $\overline{T_k}$. They form $n$ chains. Taking the upper bounds of the chains give the $n$ partites.
\end{proof}

From now on, we will focus on reconstruction of graphs from $T_3$ exclusively. We prove one more preliminary result in this section.

\begin{defn}
    A strongly regular graph with parameters $G = (v,k, \lambda, u)$ is a $k$-regular graph on $v$ vertices, where every two adjacent vertices share exactly $\lambda$ common neighbors and any two distinct non-adjacent vertices share exactly $u$ common neighbors.
\end{defn}

\begin{thm}
    For all strongly regular graphs $G = (v,k, \lambda, u)$ that satisfy $2k-\lambda \neq u +2$ and $v \neq 2k+1$, if we know the value of $k$ and that $G$ is strongly regular, then $G$ can be uniquely reconstructed from $T_3(G)$.
\end{thm}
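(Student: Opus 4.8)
The plan is to reconstruct the edge set of $G$ directly from $T_3(G)$ by counting, for each pair of vertices, how many connected triples contain it. First I would record the elementary fact that a $3$-set $\{x,y,z\}$ lies in $T_3(G)$ precisely when its induced subgraph has at least two edges, since a connected graph on three vertices is either a path or a triangle. Fixing a pair $\{x,y\}$, I would then count $c(x,y) \coloneqq |\{z \in V(G)\setminus\{x,y\} : \{x,y,z\}\in T_3(G)\}|$ and split into two cases. If $xy \in E(G)$, then $\{x,y,z\}$ is connected as soon as $z$ is adjacent to at least one of $x,y$, so by inclusion--exclusion $c(x,y) = |N(x)\cup N(y)| - 2 = 2k - \lambda - 2$, using $k$-regularity together with the fact that adjacent vertices have exactly $\lambda$ common neighbors. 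If $xy \notin E(G)$, then connectivity forces both $xz$ and $yz$ to be edges, so $c(x,y)$ equals the number of common neighbors of $x,y$, which is exactly $u$.

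The two hypotheses of the theorem are then exactly what make these counts usable. Since $2k - \lambda \neq u + 2$ is equivalent to $2k - \lambda - 2 \neq u$, the edge-count and the non-edge-count are distinct, so the function $c$ takes at most two values on the set of pairs, and these two values cleanly separate the edges from the non-edges. What remains is to decide which of the two values is the edge value; here I would use that $G$ is $k$-regular, so the number of edges equals $vk/2$, a quantity I can compute since $v$ is read off from $T_3(G)$ and $k$ is given. The value attained by exactly $vk/2$ pairs is the edge value, and this identification is unambiguous precisely because $v \neq 2k+1$ guarantees $vk/2 \neq \binom{v}{2} - vk/2$, i.e.\ the number of edges differs from the number of non-edges. (The degenerate cases of the empty and complete graph, where $c$ takes a single value, are handled directly from the known value of $k$, since only one $k$-regular graph exists for $k=0$ and for $k=v-1$.)

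Having labelled every pair as an edge or a non-edge, I recover $E(G)$ and hence $G$, completing the reconstruction. I expect the only real subtlety---rather than a genuine obstacle---to be the bookkeeping in the second step: the raw output of the counting procedure is merely an unlabelled partition of the pairs into two blocks, and one must invoke both hypotheses in tandem, one to guarantee the partition actually has two blocks and the other to match the blocks to ``edge'' and ``non-edge,'' in order to pin down the graph. The computation of $c(x,y)$ itself is routine once the inclusion--exclusion is set up correctly, the main point of care being to exclude $x$ and $y$ themselves from the union of neighborhoods in the adjacent case, which accounts for the $-2$.
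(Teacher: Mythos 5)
Your proposal is correct and follows essentially the same approach as the paper: count the connected triples containing each pair, observe that adjacent pairs give $2k-\lambda-2$ and non-adjacent pairs give $u$, and use the hypothesis $2k-\lambda\neq u+2$ to separate edges from non-edges. The only (immaterial) difference is in the final disambiguation, where the paper checks whether a single vertex lies in exactly $k$ assigned edges rather than comparing the block sizes to $vk/2$; both tests reduce to the same inequality $k \neq v-1-k$, i.e.\ $v\neq 2k+1$.
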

\begin{proof}
    For all $v_1 \neq v_2 \in V(G)$, if $v_1v_2 \in E(G)$, then the number of connected triples in $T_3(G)$ that contain both $v_1$ and $v_2$ is $2(k-1)-\lambda$, which is different from $u$, the number of connected triples in $T_3(G)$ that contain two nonadjacent vertices. So pairs of vertices that appear in the same number of connected triples are either all adjacent or all non-adjacent, resulting in only two possible edge assignments. The only difficulty is that we don't know which of the two numbers is $u$ and which is $2(k-1)-\lambda$. Nevertheless we can temporarily assign $uv$ to be an edge for all $u \neq v \in V(G)$ that are contained in one number of connected triples and all the other pairs that are contained in the other number of connected triples as non-edges. Then pick a vertex $v \in V(G)$, and check if it is contained in exactly $k$ of our assigned edges. If so, our edge assignment was correct. Otherwise, the only other possible edge assignment obtained by flipping all our current edges and non-edges is correct.
\end{proof}

\section{More $T_3$-Reconstructible Classes of Graphs}

In this section, we prove two results regarding the $T_3$-reconstructibility of the class of $k$-connected planar graphs and the class of regular planar graphs. We will do so by finding information from $T_3$ that almost gives the set of neighbors and then identifying the fake neighbors. We also present a construction that shows that the class of Hamiltonian graphs and the class of Eulerian graphs are not $T_3$-reconstructible.

\begin{defn}
    The \emph{set of roughly neighbor sets $\mathcal{N}(v)$} of a vertex $v \in V(G)$ is a set whose elements, which we will refer to as $T_3$-neighborhoods, are exactly the maximal subsets of $V(G)$ that satisfy the property that for all $v_1 \neq v_2$ in a $T_3$-neighborhood ${N_v}$, we have $\{v_1, v_2, v\} \in T_3(G)$.
\end{defn}

Clearly, for every vertex $v \in V(G)$, we can always uniquely determine $\mathcal{N}(v)$ from $T_3(G)$ by, in the worst case scenario, checking each subset of $V(G)$ to see if every pair of its elements form a connected triple with $v$ and then collecting all such distinct sets with maximal cardinality to get $\mathcal{N}(v)$.

\begin{obs}
     If $\mathcal{N}(v)$ has exactly one element, then either it is exactly the set of neighbors of $v$, which we denote as $N(v)$, or it is $N(v) \cup \{w\}$, where $w \notin N[v]$ is the unique vertex that is adjacent to everything in $N(v)$. If $\mathcal{N}(v)$ has multiple elements, then either they are all of the form of $N(v) \cup \{w_i\}$, for some $w_i \notin N[v]$ that is adjacent to everything in $N(v)$, or there is one element that is exactly $N(v)$ and the rest of the elements are of the form $\left(N(v)\setminus \{v_i\} \right) \cup \{w_j\}$ for some $v_i \in N(v)$ and $w_j$ that is adjacent to everything in $N(v)\setminus \{v_i\}$.
\end{obs}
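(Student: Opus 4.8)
The plan is to characterize exactly which subsets $S\subseteq V(G)$ satisfy the defining pairwise property of a $T_3$-neighborhood of $v$, and then simply read off the ones of maximum cardinality (recall from the remark following the definition that the elements of $\mathcal{N}(v)$ are precisely the subsets of maximum cardinality with this property). The only external fact I need is the elementary criterion that a $3$-vertex induced subgraph is connected if and only if at least two of its three vertex pairs are edges.

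First I would classify the pairwise-valid sets. Fix $v$ and let $S$ satisfy $\{a,b,v\}\in T_3(G)$ for all distinct $a,b\in S$; one checks $v\notin S$ whenever $|S|\ge 2$, since $\{v,b,v\}$ is only a $2$-set. Write $A=S\cap N(v)$ and $B=S\setminus N(v)$, and apply the two-edge criterion to a pair $a,b\in S$ in the three cases. If $a,b\in A$ then $av,bv\in E(G)$, so the triple is automatically connected; if $a\in A$, $b\in B$ then $av\in E(G)$ but $bv\notin E(G)$, so connectivity forces the edge $ab$; and if $a,b\in B$ then neither is adjacent to $v$, so the triple can never be connected. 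The key structural consequence is therefore $|B|\le 1$, and when $B=\{w\}$ we must have $A\subseteq N(v)\cap N(w)$. Hence every valid set is either a subset of $N(v)$, or is of the form $A\cup\{w\}$ with $w\notin N[v]$ and $A\subseteq N(v)\cap N(w)$.

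Next I would extract the maximum-cardinality valid sets. Within each type the maximizer is forced: among subsets of $N(v)$ the largest is $N(v)$ itself, of cardinality $d\coloneqq|N(v)|$, and for each non-neighbor $w$ the largest valid set containing $w$ is $(N(v)\cap N(w))\cup\{w\}$, of cardinality $|N(v)\cap N(w)|+1$. Since $|N(v)\cap N(w)|\le d$, the global maximum is $d+1$ precisely when some non-neighbor $w$ is adjacent to all of $N(v)$, and is $d$ otherwise. This dichotomy drives the whole statement. If some non-neighbor dominates $N(v)$, the maximum-cardinality sets are exactly the sets $N(v)\cup\{w\}$ over all such $w$ (here $N(v)$, of size $d$, is no longer of maximum size). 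Otherwise the maximum cardinality is $d$, attained by $N(v)$ together with every set $(N(v)\setminus\{v_i\})\cup\{w\}$ arising from a non-neighbor $w$ adjacent to exactly all but one vertex $v_i$ of $N(v)$.

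Finally I would match these two regimes to the statement. In the first regime every element of $\mathcal{N}(v)$ has the form $N(v)\cup\{w_i\}$: this yields the single-element subcase ``$N(v)\cup\{w\}$ for the unique such $w$'' when exactly one non-neighbor dominates $N(v)$, and the first multiple-element subcase when several do. In the second regime $N(v)\in\mathcal{N}(v)$ always, and the remaining elements are exactly the $(N(v)\setminus\{v_i\})\cup\{w_j\}$: this yields ``$\mathcal{N}(v)=\{N(v)\}$'' when no non-neighbor is adjacent to all but one vertex of $N(v)$, and the second multiple-element subcase otherwise. The point needing the most care—and the main obstacle—is exactly this cardinality bookkeeping that determines whether $N(v)$ itself survives as a maximum-cardinality set; it is displaced precisely when a single extra vertex can be appended to all of $N(v)$, which is what cleanly separates the two halves of the observation. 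Everything else is immediate, since each listed set is distinct and each type's maximizer is unique once one notes that the appended non-neighbor $w$ is recoverable from the set.
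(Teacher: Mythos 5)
Your proof is correct. The paper states this observation without any proof, so there is nothing to compare against; your argument --- classifying the pairwise-valid sets via the two-edge criterion for connectivity of a triple, deducing that such a set contains at most one non-neighbor $w$ of $v$ and that the rest of the set must lie in $N(v)\cap N(w)$, and then doing the cardinality bookkeeping to see exactly when $N(v)$ survives as a maximum-cardinality set --- is a complete and valid justification of the statement. One point worth flagging explicitly: the definition of $\mathcal{N}(v)$ says ``maximal subsets,'' and under an inclusion-maximal reading the observation would be false (one could have $\mathcal{N}(v)$ containing both an $N(v)\cup\{w_1\}$ and an $\left(N(v)\setminus\{v_i\}\right)\cup\{w_2\}$, which fits neither branch of the stated dichotomy); you correctly adopt the maximum-cardinality reading, which is the one the paper itself endorses in the paragraph following the definition.
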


Now we prove that the class of 5-connected planar graphs is $T_3$-reconstructible, whereas the class of $k$-connected planar graphs is not for $k \leq 4$. We start by proving two lemmas.

 \begin{lem}
        No two adjacent vertices in a 5-connected planar graph $G$ can have more than three vertices in common.
    \end{lem}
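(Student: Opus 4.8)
The plan is to argue by contradiction. Suppose the adjacent vertices $u,v$ had at least four common neighbors $w_1,w_2,w_3,w_4$; I will manufacture a vertex cut of size four, contradicting $\kappa(G)\ge 5$. First I would fix a planar embedding of $G$ and consider the four triangles $T_i = uvw_i$, which pairwise meet in exactly the edge $uv$. The key structural observation is a planarity/nesting claim: on each of the two sides of the curve representing the edge $uv$, the triangles sitting on that side are linearly \emph{nested}. Indeed, two triangles $T_i,T_j$ lying on the same side of $uv$ share the whole edge $uv$ and cannot cross, so (inspecting the rotations at $u$ and at $v$) one of $w_i,w_j$ must lie in the interior of the region bounded by the other triangle; this induces a strict linear order $\prec$ on the common neighbors on each side. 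I would isolate this as a short internal claim, since it is the only place the planar embedding really enters.

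Next I would distribute $w_1,\dots,w_4$ between the two sides of $uv$ and, up to swapping the sides, treat the splits $(4,0)$, $(3,1)$, and $(2,2)$. In each case the goal is to name two common neighbors $w_i,w_j$ whose $4$-cycle $Q = u\,w_i\,v\,w_j$ (all four edges $uw_i, w_iv, vw_j, w_ju$ are present) encloses one remaining common neighbor and excludes another. When at least three lie on one side, nested as $w_i \prec w_k \prec w_j$, the cycle $Q = u\,w_i\,v\,w_j$ lies entirely on that side and traps $w_k$ in its interior, while the fourth common neighbor lies strictly outside $Q$ (it is either on the other side of $uv$, or on the same side but not between $T_i$ and $T_j$). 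In the balanced $(2,2)$ case I would instead form a \emph{crossing} cycle: with $w_1\prec w_2$ innermost/outermost on one side and $w_3\prec w_4$ on the other, the cycle $Q = u\,w_1\,v\,w_4$ encloses the inner neighbor $w_3$ of the far side while excluding the outer neighbor $w_2$ of the near side.

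Finally I would invoke the Jordan curve theorem. In each case $Q$ is a closed curve in the plane meeting $G$ only along the vertices $\{u,v,w_i,w_j\}$ and the edges of $Q$. Any path in $G$ joining the enclosed common neighbor to the excluded one is a curve from the interior of $Q$ to its exterior, so it must cross $Q$; by planarity such a crossing can only occur at a vertex of $Q$. Hence every such path passes through $\{u,v,w_i,w_j\}$, which is therefore a cut of size $4$ separating two distinct vertices of $G$ (both genuinely present, as $u,v,w_1,\dots,w_4$ already give six vertices). This contradicts $\kappa(G)\ge 5$, so $u$ and $v$ can have at most three common neighbors.

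The routine part is reading off the enclosed/excluded neighbors once the embedding is fixed; the delicate part—and the main obstacle—is the balanced $(2,2)$ split, where no one-sided $4$-cycle traps a common neighbor and one is forced to use a crossing cycle assembled from one triangle on each side, then to verify its interior/exterior carefully. Establishing the nesting claim rigorously from the rotation system at $u$ and $v$ is the other point requiring care; after that, the separation argument is uniform.
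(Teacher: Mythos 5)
Your proof is correct, but it takes a genuinely different route from the paper's. You fix a planar embedding and use the rotation system at $u$ and $v$ to locate a $4$-cycle $u\,w_i\,v\,w_j$ which, by the Jordan curve theorem, separates one common neighbour from another, so that $\{u,v,w_i,w_j\}$ is a cut of size $4$, contradicting $5$-connectivity. The paper never touches an embedding: it uses $5$-connectivity to produce three paths $P_{v_1v_2}$, $P_{v_2v_4}$, $P_{v_1v_4}$, each avoiding four prescribed vertices, and contracts them to exhibit either a $K_5$ minor (when one path is internally disjoint from the other two) or a $K_{3,3}$ minor (otherwise), contradicting planarity via Kuratowski--Wagner. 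Your argument invokes $5$-connectivity only once, at the very end, as ``no separator of size $4$,'' and pushes all the topology into standard facts about embeddings of $K_{2,4}$; this buys you freedom from the paper's case analysis on how the three connecting paths intersect one another, which is the fiddly part of its proof. Conversely, the paper's minor-based argument needs no rotation systems or Jordan-curve reasoning. One simplification available to you: rather than splitting into the $(4,0)$, $(3,1)$, $(2,2)$ distributions across the two sides of $uv$, note that every face of the embedded $K_{2,4}$ on $\{u,v\}$ and $\{w_1,\dots,w_4\}$ is a quadrilateral $u\,w_i\,v\,w_j$, so the four common neighbours acquire a cyclic order $w_{\sigma(1)},\dots,w_{\sigma(4)}$ around $u$ (reversed around $v$); the cycle $u\,w_{\sigma(1)}\,v\,w_{\sigma(3)}$ then always separates $w_{\sigma(2)}$ from $w_{\sigma(4)}$ no matter where the edge $uv$ sits in the rotation, collapsing your three cases (and the nesting claim) into one.
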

    \begin{proof}
        Suppose, for contradiction, that there exist two adjacent vertices $v \neq v_i \in V(G)$ with distinct common neighbors $v_1, v_2, v_3,$ and $v_4$. Since $G$ is 5-connected, there is a path from $v_1$ to $v_2$ that does not contain $v_3, v_4, v$, or $v_i$, which we denote as $P_{v_1v_2}$. Similarly there exist $P_{v_2v_4}$ and $P_{v_1v_4}$ that does not contain any vertex from $\{v_1, v_3, v_i, v\}$ and $\{v_2, v_3, v, v_i\}$, respectively. If any one of the three paths, say $P_{v_1v_2}$, is internally vertex-disjoint with the other two, then we can contract all the edges except the one incident to $v_2$ in $P_{v_2v_4}$ in a way that effectively glues all of its internal vertices to $v_4$, and similarly we glue all the internal vertices of $P_{v_1v_4}$ to $v_4$. After contracting $P_{v_1v_2}$ to an edge between $v_1$ and $v_2$, we notice that $G[\{v, v_i, v_2, v_4, v_1\}]$ contains a $K_5$ minor, which contradicts the premise that $G$ is planar. Otherwise, without loss of generality, assume both $P_{v_1v_2}$ and $P_{v_2v_4}$ intersect $P_{v_1v_4}$ at some internal vertex, respectively. Let $a$ be the vertex in $P_{v_1v_2} \cap P_{v_1v_4}$ that is closet to $v_2$ in $P_{v_1v_2}$. Contract all edges in $P_{v_1v_4}[v_1-a]$, which is the part of $P_{v_1v_4}$ that starts at $v_1$ and ends at $a$, so that $P_{v_1v_4}[v_1-a]$ becomes just the edge $v_1a$. Then, contract $P_{v_1v_4}[a-v_4]$ to $av_4$ and $P_{v_1v_2}[a-v_2]$ to $av_2$. Call the resulting graph $\Tilde{G}$. Now notice that $\Tilde{G}[\{v,a,v_i, v_1, v_2, v_4\}]$ contains a $K_{3,3}$ as its subgraph. This shows that the original graph $G$ contains a $K_{3,3}$ minor and thus can not be planar. This is a contradiction.
    \end{proof}

    \begin{lem}
        For a 5-connected planar graph $G$ and a vertex $v \in V(G)$, a $T_3$-neighborhood $N_v$ of $v$ is exactly the set of neighbors $N(v)$ if and only if $N_v$ does not contain a vertex $w$ with a $T_3$-neighborhood $N_w$ that contains $N_v \setminus \{w\}$. In particular, no $T_3$-neighborhood of a neighbor $v_i$ of $v$ can contain $N(v) \setminus \{v_i\}$.
    \end{lem}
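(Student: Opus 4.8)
The plan is to prove the biconditional by treating its genuine content, the ``in particular'' statement, separately: that statement yields the forward implication outright, while the reverse implication is a direct consequence of Observation 3.2 and needs neither planarity nor connectivity. Two elementary facts will be used repeatedly. First, any $T_3$-neighborhood of a vertex $u$ contains at most one non-neighbor of $u$, since two non-neighbors $x,y$ would make $\{u,x,y\}$ edgeless at $u$ and hence disconnected, contradicting that $x,y$ lie in a common $T_3$-neighborhood of $u$. Second, because $G$ is $5$-connected, every vertex has degree at least $5$.

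For the forward implication I would observe that it is literally the ``in particular'' statement: if $N_v = N(v)$ and some $w \in N_v$ had a $T_3$-neighborhood $N_w \supseteq N_v \setminus \{w\} = N(v)\setminus\{w\}$, then $w$ would be a neighbor of $v$ whose $T_3$-neighborhood contains $N(v)\setminus\{w\}$, which the ``in particular'' claim forbids. For the reverse implication I argue contrapositively: if $N_v \neq N(v)$, then by Observation 3.2 the set $N_v$ has the form $N(v)\cup\{w\}$ or $(N(v)\setminus\{v_i\})\cup\{w\}$ with $w$ adjacent to every other member of $N_v$, so $N_v\setminus\{w\}\subseteq N(w)$. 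Every pair in $N_v\setminus\{w\}$ is then a pair of neighbors of $w$ and hence forms a connected triple with $w$, so $N_v\setminus\{w\}$ is contained in some $T_3$-neighborhood $N_w$ of $w$, exhibiting a vertex $w \in N_v$ that violates the stated property.

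It remains to prove the ``in particular'' statement, which is where planarity enters. Suppose toward a contradiction that a neighbor $v_i$ of $v$ has a $T_3$-neighborhood $N_{v_i} \supseteq N(v)\setminus\{v_i\}$. By the first elementary fact, at most one vertex of $N(v)\setminus\{v_i\}$ fails to be a neighbor of $v_i$. If all of them are neighbors of $v_i$, then the adjacent pair $v,v_i$ shares at least $\deg(v)-1\geq 4$ common neighbors, contradicting Lemma 3.3. Otherwise exactly one vertex $d\in N(v)\setminus\{v_i\}$ is a non-neighbor of $v_i$, while the other $\deg(v)-2\geq 3$ vertices of $N(v)\setminus\{v_i\}$ are common neighbors of $v$ and $v_i$. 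Since the non-neighbor $d$ lies in $N_{v_i}$, each such common neighbor $a$ must make $\{v_i,d,a\}$ connected, and as $v_id\notin E(G)$ this forces $da\in E(G)$. Selecting three of these common neighbors $a,b,c$, the six distinct vertices $\{v,v_i,d\}$ and $\{a,b,c\}$ carry all nine cross-edges, so $G$ contains a $K_{3,3}$ subgraph, contradicting planarity.

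The main obstacle I anticipate is precisely this second case of the ``in particular'' argument: recognizing that the unique non-neighbor $d$, which is a neighbor of $v$ but not of $v_i$, is forced by its membership in $N_{v_i}$ to be adjacent to every common neighbor of $v$ and $v_i$, and that this forced adjacency together with $v$ and $v_i$ assembles a $K_{3,3}$. Once this structural point is seen, the remaining checks---that the six vertices are distinct and that $5$-connectivity already supplies the three common neighbors playing the role of $a,b,c$---are routine; indeed in this case Lemma 3.3 is not strictly needed, as the degree bound alone furnishes the $K_{3,3}$.
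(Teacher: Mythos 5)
Your proof is correct, and its overall skeleton matches the paper's: the reverse implication via Observation 3.2 (the extra vertex $w$ satisfies $N_v\setminus\{w\}\subseteq N(w)$, which sits inside some $T_3$-neighborhood of $w$), and the case of the forward direction where $v_i$ is adjacent to all of $N(v)\setminus\{v_i\}$ dispatched by Lemma 3.3, are both exactly what the paper does. Where you genuinely diverge is the remaining case, in which exactly one vertex $d\in N(v)\setminus\{v_i\}$ is a non-neighbor of $v_i$. The paper only extracts from Observation 3.2 that $d$ is adjacent to at least $\deg(v_i)-1$ vertices of $N(v_i)$, and is then forced into a sub-case split on $|N(v)|\geq 6$ versus $|N(v)|=5$, with the latter requiring $5$-connectivity a second time to route a path from $d$ to the remaining common neighbor and contract it into a $K_{3,3}$ minor. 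You instead exploit the definition of a $T_3$-neighborhood directly: since $d$ and each common neighbor $a$ both lie in $N_{v_i}$ and $v_id\notin E(G)$, connectivity of $\{v_i,d,a\}$ forces $da\in E(G)$, so $d$ is adjacent to \emph{all} of the at least three common neighbors of $v$ and $v_i$ inside $N(v)$, and $\{v,v_i,d\}$ against three of them is already a $K_{3,3}$ subgraph. This is a strictly stronger deduction than the paper's, and it eliminates both the case split and the minor-contraction argument; your closing observation that Lemma 3.3 is not needed in this case is also accurate. The only cost is none---your argument uses less machinery---so this reads as a simplification of the paper's proof rather than merely an alternative.
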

    \begin{proof}
        It follows from Observation 3.2 that if an element ${N_v}$ in $\mathcal{N}(v)$ is not $N(v)$, then it must contain an element $w$ that is adjacent to everything in ${N_v} \setminus \{w\}$.  Since $N(w)$ will always be contained in some element, say ${N_w}$, of $\mathcal{N}(w)$, we have that ${N_w} \in \mathcal{N}(w)$ contains ${N_v} \setminus \{w\}$.

        For the other direction, suppose, for contradiction, that ${N_v} = N(v)$ contains an element $v_i$ such that there exists ${N_{v_i}} \in \mathcal{N}(v_i)$ that contains $N(v) \setminus \{v_i\}$. Then either $v_i$ is adjacent to everything in $N(v) \setminus \{v_i\}$ or it is adjacent to everything except say $v_h$, in which case $v_h$ would need to be adjacent to at least $\operatorname{deg}(v_i)-1$ many vertices in $N(v_i)$ by Observation 3.2. Since $G$ is 5-connected, every vertex including $v$ would have degree at least 5. Hence the first scenario is impossible because we know by Lemma 3.3 that $v$ and $v_i$ cannot be adjacent and share more than three common neighbors in a planar graph.

        So $v_i$ is adjacent to $N(v) \setminus \{v_i, v_l\}$ for some $v_l \neq v_i \in N(v)$. If $|N(v)| \geq 6$, then $v_i$ and $v_l$ would share at least four common neighbors within $N(v)$, which, together with $v, v_i$, and $v_l$ would induce a subgraph that contains a $K_{3,3}$, a contradiction. Finally supoose $|N(v)| = 5$ and say $N(v) = \{v_i, v_1, v_2, v_3, v_l\}$. We know $v_l$ cannot be adjacent to all of $v_1, v_2,$ and $v_3$ because otherwise $G[\{v_i, v_l, v, v_1, v_2, v_3\}]$ would contain a $K_{3,3}$, which is a contradiction. Yet the fact that $v_l$ is adjacent to at least $\operatorname{deg}(v_i)-1$ many vertices in $N(v_i)$ implies that $v_l$ is adjacent to two of $v_1, v_2, v_3$, say they are $v_2$ and $v_3$, and shares at least one common neighbor $x$ with $v_i$ outside of $N(v)$. But since $G$ is 5-connected, we know there is a path between $v_l$ and $v_1$ that does not go through any of the vertices in $\{v, v_i, v_2, v_3\}$. Therefore, after we contract this path to a single edge between $v_l$ and $v_1$, the vertex set $\{v,v_i, v_l, v_1, v_2 ,v_3\}$ will induce a subgraph in the resulting graph that contains a $K_{3,3}$. This means that $G$ has a $K_{3,3}$ minor, which is a contradiction.
    \end{proof}

\begin{thm}
    For all $k \leq 4$, the class of $k$-connected planar graphs is not $T_3$-reconstructible, whereas the class of $5$-connected planar graphs is $T_3$-reconstructible and the class of $k$-connected planar graphs does not exist for $k \geq 6$.
\end{thm}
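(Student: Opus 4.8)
The plan is to treat the three assertions separately, with the $5$-connected case carrying almost all of the work. For the non-existence claim ($k\geq 6$), I would simply invoke Euler's formula: a simple planar graph on $n\geq 3$ vertices has at most $3n-6$ edges, so its average degree is strictly less than $6$ and hence $\delta(G)\leq 5$. Since $\kappa(G)\leq \delta(G)$, no planar graph is $6$-connected, so the class of $k$-connected planar graphs is empty for every $k\geq 6$.

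\textbf{Non-reconstructibility for $k\leq 4$.} Here it suffices to exhibit two distinct labeled $4$-connected planar graphs with the same $T_3$, since a $4$-connected graph is also $k$-connected for every $k\leq 4$, so one example covers all four cases at once. I would use the following characterization: a graph has all of $\binom{V}{3}$ as connected triples precisely when every $3$-set spans at least two edges, i.e. when its complement contains no two edges sharing a vertex, i.e. when $G=K_n$ minus a matching. On six vertices, $K_6$ minus a matching of size $s$ has $15-s$ edges, so planarity ($\leq 12$ edges) forces $s=3$; the resulting graph is the octahedron $K_6$ minus a perfect matching, which is $4$-connected and planar. Now delete two \emph{different} perfect matchings of $K_6$, say $\{12,34,56\}$ and $\{14,25,36\}$, to obtain labeled graphs $O_1\neq O_2$ (they disagree on the edge $14$). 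Both are $4$-connected planar and both satisfy $T_3=\binom{V}{3}$, so the class of $k$-connected planar graphs is not $T_3$-reconstructible for any $k\leq 4$.

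\textbf{Reconstructibility for $k=5$.} The strategy is to recover $N(v)$ for every vertex $v$ directly from $T_3(G)$; this determines $E(G)$ and hence $G$. First I would prove that in a $5$-connected planar graph no vertex $w\notin N[v]$ is adjacent to all of $N(v)$. Indeed, such a $w$ would share at least three common neighbours $v_1,v_2,v_3$ with $v$; since $G-\{v,w\}$ is $3$-connected, the fan lemma yields three internally disjoint paths from a fourth vertex $u$ to $\{v_1,v_2,v_3\}$ avoiding $v$ and $w$, and contracting these paths gives a $K_{3,3}$ minor with parts $\{v\},\{w\},\{u\}\cup(\text{paths})$ and $\{v_1\},\{v_2\},\{v_3\}$, contradicting planarity. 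Because $T_3$-neighbourhoods are by definition maximal, $N(v)$ is one exactly when no vertex outside $N[v]$ is adjacent to all of $N(v)$; the claim just proved therefore guarantees $N(v)\in\mathcal{N}(v)$, excluding the degenerate cases of Observation 3.2 in which every element of $\mathcal{N}(v)$ strictly contains $N(v)$. With $N(v)\in\mathcal{N}(v)$ secured, Lemma 3.4 supplies an intrinsic, $T_3$-computable test that singles it out: $N(v)$ is the unique element of $\mathcal{N}(v)$ containing no vertex $w$ whose own $T_3$-neighbourhood contains the rest. Running this for all $v$ reconstructs every neighbourhood, and since the whole procedure reads only $T_3$, any two $5$-connected planar graphs with equal $T_3$ must have identical neighbourhoods and therefore coincide.

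\textbf{The main obstacle.} I expect the delicate point to be exactly the interface between Observation 3.2 and Lemma 3.4: one must ensure that the test of Lemma 3.4 is satisfied neither vacuously (no element of $\mathcal{N}(v)$ qualifies) nor ambiguously (two distinct elements qualify). Lemma 3.4 already shows that an element qualifies if and only if it equals $N(v)$, so the genuine content is the structural input guaranteeing that $N(v)$ actually \emph{appears} as a maximal $T_3$-neighbourhood. This is where the $K_{3,3}$-minor argument above, together with Lemma 3.3 (which forbids adjacent vertices from sharing four common neighbours), must be deployed carefully, since these are the planarity and connectivity facts that make the reconstruction of $N(v)$ from $T_3$ unambiguous.
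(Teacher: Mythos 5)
Your proposal is correct, and on two of the three parts it takes a genuinely different route from the paper. For $k\geq 6$ you and the paper give the same Euler-formula argument. For $k\leq 4$ the paper exhibits an arbitrarily large pair of $4$-connected planar graphs (given only as a figure, two labelings differing by a transposition); your pair of labeled octahedra --- $K_6$ minus two different perfect matchings, both with $T_3=\binom{V}{3}$ --- is a self-contained and easily verified substitute, and a single counterexample pair is all the theorem requires (the paper's ``arbitrarily large'' feature is a bonus it uses only in the following remark). The real divergence is in the $5$-connected case: the paper accepts that $N(v)$ may fail to appear in $\mathcal{N}(v)$ (every element being $N(v)\cup\{w_i\}$) and handles that case by using the ``in particular'' clause of Lemma 3.4 to recognize $w_i$ as the unique vertex of $N_v$ whose $T_3$-neighborhood misses only one element of $N_v$. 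You instead prove an additional structural fact --- in a $5$-connected planar graph no $w\notin N[v]$ is adjacent to all of $N(v)$ --- via the fan lemma applied to the $3$-connected graph $G-\{v,w\}$ and a $K_{3,3}$-minor contradiction; this makes the paper's extra case vacuous, so $N(v)\in\mathcal{N}(v)$ always holds and Lemma 3.4 alone finishes. Your minor argument checks out (with $\deg(v)\geq 5$ there is a fourth neighbor $u$ to fan out from, and the fan's paths avoid $v$, $w$, and meet $\{v_1,v_2,v_3\}$ only at their endpoints), and your observation that $N(v)$ is a maximal $T_3$-neighborhood precisely when no outside vertex dominates it is exactly the right interface with Observation 3.2. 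The trade-off: your route costs one more planarity/connectivity lemma but buys a shorter reconstruction procedure with no second case; the paper's route avoids the extra lemma at the price of a more delicate identification step.
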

\begin{proof}
    First, it is clear that if a graph is $k$-connected, then every vertex must have degree at least $k$, otherwise removing the neighbors of a vertex that has degree less than $k$ will disconnect the graph. So a $k$-connected planar graph, where $k \geq 6$, requires every vertex to have degree at least 6, which would give at least $3n$ edges, which is strictly greater than $3n-6$, the maximum number of edges a planar graph on $n$ vertices can have. This is a contradiction. Thus $k$-connected planar graphs do not exist for $k \geq 6$.

    As for why the class of $k$-connected planar graphs is not $T_3$-reconstructible for all $k \leq 4$, we present an arbitrarily large construction of two 4-connected planar graphs, obtained by switching the labels of $n-1$ and $n-2$, that are not identical as labeled graphs but share the same set of connected triples in Figure~\ref{fig:construction}.

    We next show that the class of 5-connected planar graphs is $T_3$-reconstructible by showing that we can uniquely determine the neighbors of every vertex. Fix a vertex $v \in V(G)$ and look at $\mathcal{N}(v)$. Regardless of which of the four cases mentioned in Observation 3.2 we have, we would be able to identify $N(v)$ if we can identify the existence of $w$ or $w_i$ and precisely which vertex it is.

    With Lemma 3.4, we can pick out $N(v)$ if it is contained in $\mathcal{N}(v)$. Otherwise, every element of $\mathcal{N}(v)$ must be of the form $N(v) \cup \{w_i\}$, where $N(v) \subseteq N(w_i)$. We can pick a random $N_v = N(v) \cup \{w_i\}$ from $\mathcal{N}(v)$. We know from Lemma 3.4 that $w_i$ is the only vertex in $N_v$ with a $T_3$-neighborhood that contains $N(v)$, since for any $s \in N(v)$, a $T_3$-neighborhood $N_s$ cannot even contain $N(v) \setminus \{s\}$. Thus we can identify $w_i$ as the vertex with a $T_3$-neighborhood that contains all but one element of $N_v$, while all $T_3$-neighborhoods of neighbors of $v$ are necessarily missing at least two elements of $N_v$. Throwing out the $w_i$ from $N_v = N(v) \cup \{w_i\}$ gives us $N(v)$.
\end{proof}

\begin{figure}[htbp]
\centering
\includegraphics{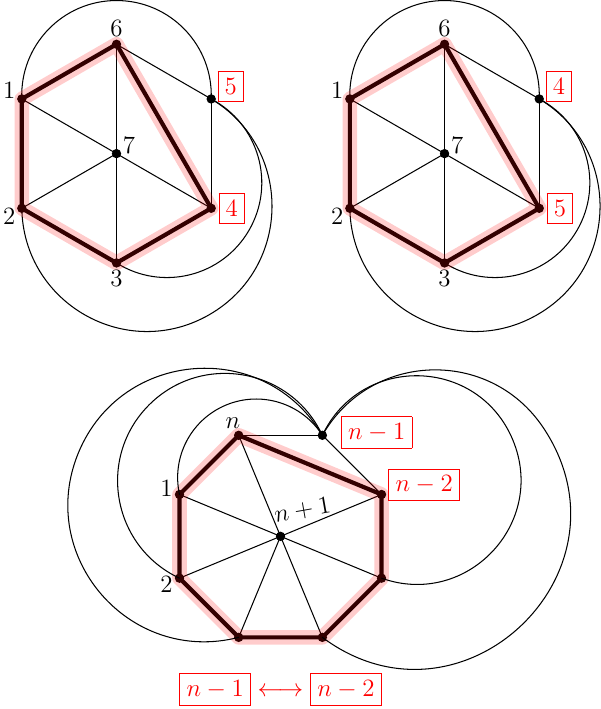}
\caption{An arbitrarily large construction that shows the class of $k$-connected planar graphs is not $T_3$-reconstructible for $k \leq 4$. }
\label{fig:construction}
\end{figure}

\begin{rem}
    Since the two arbitrarily large graphs in Figure~\ref{fig:construction} obtained by switching the labels of $n-1$ and $n-2$ are both Hamiltonian and Eulerian, this construction also shows that the class of Hamiltonian graphs and the class of Eulerian graphs are not $T_3$-reconstructible.
\end{rem}

Next, we show that the class of regular planar graphs on $n \geq 7$ vertices is $T_3$-reconstructible. To do this, we show that we can first recognize the degree $d$ and then show that we can uniquely reconstruct the underlying graph $G$ knowing that it is a $d$-regular planar graph. Note that $d$ can only range from two to five since we are dealing with connected planar graphs. We break down the proof of the theorem into a series of lemmas and their proofs below.

\begin{lem}
        If $G$ is a 3-regular planar graph, then there exists a vertex $v \in V(G)$ such that every element in $\mathcal{N}(v)$ has order 3.
    \end{lem}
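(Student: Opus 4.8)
The plan is to translate the conclusion into a statement about \emph{twin} vertices—pairs $v \ne w$ with $N(v) = N(w)$—and then forbid the extremal configuration using planarity. Since $G$ is $3$-regular we have $|N(v)| = 3$ for every $v$, so Observation 3.2 tells us that each element of $\mathcal{N}(v)$ is one of $N(v)$, a set $(N(v) \setminus \{v_i\}) \cup \{w_j\}$, or a set $N(v) \cup \{w_i\}$; the first two have order $3$ and only the last has order $4$. Moreover $N(v) \cup \{w_i\}$ occurs exactly when some $w_i$ is adjacent to all three vertices of $N(v)$, and by $3$-regularity such a $w_i$ satisfies $N(w_i) = N(v)$, i.e.\ it is a twin of $v$. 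Hence every element of $\mathcal{N}(v)$ has order $3$ if and only if $v$ has no twin, and the lemma reduces to producing a single vertex with no twin.

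I would prove such a vertex exists by contradiction: suppose every vertex of $G$ has a twin. Fix $v$, let $v'$ be its twin, and write $N(v) = N(v') = \{a, b, c\}$. The key step is to chase the twins of $a$, $b$, $c$. Because $a \in N(v) \cap N(v')$ we have $v, v' \in N(a)$, so any twin of $a$ is also adjacent to $v$ and therefore lies in $N(v) = \{a, b, c\}$; being distinct from $a$, it is $b$ or $c$. Applying the same reasoning to $b$ and to $c$ forces $N(a) = N(b) = N(c)$. This common neighborhood contains $v$ and $v'$ and exactly one further vertex $d$, so $N(a) = N(b) = N(c) = \{v, v', d\}$, and $3$-regularity of $d$ then gives $N(d) = \{a, b, c\}$.

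It remains to extract the forbidden minor. One checks that the six vertices $v, v', d, a, b, c$ are distinct (for instance $d \notin \{a,b,c\}$ since $d \in N(a)$ while $a \notin N(a)$, and $d \ne v, v'$ since $N(a)$ has three elements), and that the induced subgraph on them is exactly the complete bipartite graph $K_{3,3}$ with parts $\{v, v', d\}$ and $\{a, b, c\}$. This contradicts the planarity of $G$. Therefore not every vertex can have a twin, which by the reduction of the first paragraph yields a vertex $v$ all of whose roughly neighbor sets have order $3$.

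I expect the main obstacle to be the twin-chasing in the second paragraph—showing that the twins of $a$, $b$, $c$ are trapped inside $\{a, b, c\}$ and are thereby forced to share the single common neighborhood $\{v, v', d\}$—rather than the final planarity contradiction, which is immediate once $K_{3,3}$ is exposed. I would also be careful to confirm that all six vertices are genuinely distinct, so that the induced subgraph is a true $K_{3,3}$ and not a collapsed version of it; this distinctness is precisely what makes the non-planarity conclusion valid.
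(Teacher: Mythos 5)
Your proof is correct, and it reaches the same contradiction as the paper (planarity violated by a $K_{3,3}$) but by a cleaner route. Both arguments begin identically: by Observation 3.2 and $3$-regularity, some element of $\mathcal{N}(v)$ has order $4$ exactly when $v$ has a twin (a vertex with the same open neighborhood), so one assumes for contradiction that every vertex has a twin. The paper then fixes twins $v,w$ with $N(v)=N(w)=\{v_1,v_2,v_3\}$ and splits into cases according to whether $\{v_1,v_2,v_3\}$ is independent, in each case hunting down a vertex that cannot have any twin (invoking a $K_{3,3}$ only in one sub-branch). Your twin-chasing step---any twin of $a\in N(v)$ must itself be adjacent to $v$ and hence lie inside $N(v)=\{a,b,c\}$---replaces that case analysis with a single structural conclusion: $N(a)=N(b)=N(c)=\{v,v',d\}$, which simultaneously rules out edges inside $\{a,b,c\}$ (twins are non-adjacent) and exhibits the induced $K_{3,3}$ on $\{v,v',d\}\cup\{a,b,c\}$ unconditionally. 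The distinctness checks you flag are exactly the ones needed, and they all go through, so your version is a genuine simplification of the paper's proof rather than a gap.
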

    \begin{proof}
        Suppose, for contradiction, that this is not the case. Then for every $v \in V(G)$, there exists $w \neq v \in V(G)$ such that $N(v) = \{v_1, v_2, v_3\} = N(w)$. Fix $v,w$ that satisfy the above. If $\{v_1, v_2, v_3\}$ forms an independent set, then $v_1$ would have a neighbor, say $x_1$, outside of $N(v) \cup \{v,w\}$. Then only $v_2$ or $v_3$ can have the same exact set of neighbors as $v_1$. Without loss of generality suppose it's $v_3$. If $v_2$ is also a neighbor of $x_1$, then $G[\{v,w, x_1, v_1, v_2, v_3\}]$ would contain a $K_{3,3}$, a contradiction. Thus $v_2$ has a neighbor $x_2 \neq x_1$ outside of $N(v) \cup \{v,w\}$. But neither $v_1$ nor $v_3$ can be adjacent to $x_2$ because they both have degree three. So there does not exist $u \neq v_2 \in V(G)$ with $N(v_2) = N(u)$, which is a contradiction. If $\{v_1, v_2, v_3\}$ does not form an independent set, however, then there would exist at least one edge, say $v_1v_2$, among $\{v_1, v_2, v_3\}$. In this case, there won't exist any $u \neq v_3 \in V(G)$ that satisfies $N(u) = N(v_3)$ because the only possible candidates are $v_1$ and $v_2$, both of which already have three neighbors. This is a contradiction.
    \end{proof}

    \begin{lem}
        The class of 3-regular planar graphs on greater or equal to 5 vertices is $T_3$-reconstructible.
    \end{lem}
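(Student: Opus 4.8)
The plan is to reconstruct the adjacency of every pair from local triple-counts, and then to show that the ambiguities that remain cannot actually occur in a $3$-regular planar graph. For an unordered pair $\{a,b\}$ let $c(a,b)$ denote the number of vertices $x$ with $\{a,b,x\}\in T_3(G)$; this number is read directly off $T_3(G)$. Since a three-vertex set is connected exactly when it spans at least two edges, a short computation gives $c(a,b)=4-\lambda(a,b)$ when $ab\in E(G)$ and $c(a,b)=\mu(a,b)$ when $ab\notin E(G)$, where $\lambda$ and $\mu$ count common neighbours in the two cases. Because $G$ is $3$-regular we have $\lambda\le 2$ and $\mu\le 3$, so $c(a,b)=4$ forces an edge, $c(a,b)\le 1$ forces a non-edge, and only the values $c(a,b)\in\{2,3\}$ leave a pair ambiguous. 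Thus $T_3(G)$ already determines every adjacency except those of ambiguous pairs.

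Now suppose $G_1\ne G_2$ are $3$-regular planar graphs on the same $n\ge 5$ labelled vertices with $T_3(G_1)=T_3(G_2)$; I want a contradiction. The two graphs agree on every non-ambiguous pair, so their symmetric difference $D=E(G_1)\triangle E(G_2)$ consists only of pairs with $c\in\{2,3\}$. The first key step is a parity observation: at each vertex $v$, $3$-regularity forces the number of $G_1$-only edges at $v$ to equal the number of $G_2$-only edges at $v$, so $\deg_D(v)$ is even. Consequently $D$ decomposes into closed trails along which $G_1$-edges and $G_2$-edges strictly alternate, and in particular, if $D\ne\emptyset$, then $D$ contains an alternating cycle of even length at least $4$.

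The second key step analyses such an alternating cycle according to the two ambiguous values. If a flipped pair has $c=2$, then in the graph where it is an edge it has $\lambda=2$, which uses up the degree of \emph{both} endpoints and so fixes their entire neighbourhoods, forcing $\{a,b,x,y\}$ to induce a $K_4$ with one edge missing; I expect to show, using only $3$-regularity, that this rigidity is incompatible with that same pair being a non-edge with $\mu=2$ while lying on an alternating cycle, so no $c=2$ pair occurs in $D$. This reduces everything to the value $c=3$, where a flipped pair is, in one of the two graphs, a non-edge with $\mu=3$, i.e.\ its endpoints are \emph{twins} with identical neighbourhoods. Propagating these twin relations around the alternating cycle collapses the vertices involved into a complete bipartite pattern: already for an alternating $4$-cycle $a,b,c,d$ the twin identities yield six vertices $a,d,e$ and $b,c,f$ with all nine cross pairs adjacent except $e,f$, and since the four inner vertices connect to the rest of $G$ only through $e$ and $f$, connectivity supplies an $e$--$f$ path avoiding them; contracting it produces a $K_{3,3}$ minor, contradicting planarity. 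Carrying this out for every alternating cycle forces $D=\emptyset$, i.e.\ $G_1=G_2$.

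The \textbf{main obstacle} is the case analysis of the second step for general alternating cycles—longer even cycles and cycles that mix the values $c=2$ and $c=3$—where one must package the twin and degree constraints so that a forbidden $K_{3,3}$ (or $K_5$) minor always emerges, using $n\ge 5$ to avoid small degenerate components and connectivity to route the final branch of the minor. A convenient alternative entry point, should the global argument prove unwieldy, is to anchor the reconstruction at the twin-free vertex provided by Lemma 3.7: at such a vertex no pair is the $\mu=3$ twin case, so all of its $c=3$ pairs are edges, and one can then attempt to propagate correct neighbourhoods outward through the graph.
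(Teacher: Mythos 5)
Your approach is genuinely different from the paper's: the paper works with the $T_3$-neighborhoods $\mathcal{N}(v)$ of Observation 3.2 and identifies the one ``fake'' vertex $w$ in each candidate set, whereas you count, for each pair, the triples containing it, classify pairs as forced edges, forced non-edges, or ambiguous ($c\in\{2,3\}$), and then analyze the symmetric difference of two hypothetical reconstructions. The opening computations ($c=4-\lambda$ for edges, $c=\mu$ for non-edges, hence only $c\in\{2,3\}$ is ambiguous) and the parity observation that $D=E(G_1)\triangle E(G_2)$ has even degree everywhere are correct and attractive. However, what you have written is a plan rather than a proof, and the part that is deferred is precisely the mathematical content of the lemma. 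Concretely: (i) the $c=2$ case is dispatched with ``I expect to show,'' with no argument given; (ii) even in the illustrative alternating $4$-cycle with all four pairs of type $c=3$, the step ``connectivity supplies an $e$--$f$ path avoiding $a,b,c,d$'' is not justified --- since $a,b,c,d$ have all their neighbors inside the six-set, connectivity of $G$ does not prevent $\{a,b,c,d\}$ from separating $e$ from $f$, and you need an extra argument (e.g., a degree-parity count inside the component of $e$ in $G-\{a,b,c,d\}$, or a further appeal to $T_3(G_1)=T_3(G_2)$, which in fact forces edges such as $be$ to lie in $D$ as well, so $D$ is never just one clean $4$-cycle) to close this; (iii) the reduction from ``$D\neq\emptyset$'' to ``$D$ contains an alternating \emph{cycle}'' needs care, since even-degree decompositions give alternating closed trails, from which an alternating cycle (no repeated vertices) does not follow automatically; and (iv) the general case of longer and mixed-type alternating cycles, which you yourself flag as the main obstacle, is exactly where the paper's proof spends all of its effort. (Minor slip: in your $4$-cycle analysis the near-$K_{3,3}$ bipartition should be $\{a,d,f\}$ against $\{b,c,e\}$, not $\{a,d,e\}$ against $\{b,c,f\}$.) As it stands the proposal identifies a promising alternative framework but does not prove the lemma; your suggested fallback of anchoring at the twin-free vertex from Lemma 3.7 and propagating outward is closer in spirit to what the paper actually does.
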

    \begin{proof}
         If $\mathcal{N}(v)$ has only one element $N_v$, then $N(v) = N_v$ if $|N_v| = 3$, otherwise $N_v = N(v) \cup \{w\}$. We will show that we can recognize $w$ in the latter case by showing it is the only vertex $s$ in $N_v$ such that there exists ${N_s}^j \in \mathcal{N}(s)$, where ${N_s}^j = (N_v \cup \{v\}) \setminus \{s\}$. Let $N(v) = \{v_1, v_2, v_3\}$. Suppose, for contradiction, that say $v_1 \in N(v)$ is also such that there exists ${N_{v_1}}^j \in \mathcal{N}(v_1)$, where ${N_{v_1}}^j = (N_v \cup \{v\}) \setminus \{v_1\}$. Then $v_1$ would need to be adjacent to one of $v_2$ and $v_3$ and share all its neighbors with the other, which would result in one of $v_2$ and $v_3$ having degree four, a contradiction.

         On the other hand, if $\mathcal{N}(v)$ has multiple elements, it cannot be the case that they are each of the form $N(v) \cup\{w_i\}$, where $N(w_i) = N(v)$, otherwise $G[N(v) \cup \{v, w_i, w_j\}]$ would contain a $K_{3,3}$, which is a contradiction. By Observation 3.2, we know that $N(v)$ is an element in $\mathcal{N}(v)$ and all the other elements would be of the form $S_j \coloneqq (N(v) \setminus \{v_i\}) \cup \{w_j\}$, where $w_j \notin N(v)$ and $N(v) \setminus \{v_i\} \subset N(w_j)$. If there is exactly one element $N_v \in \mathcal{N}(v)$ such that $|N_v \cap S_j| = 2$ for all $S_j \neq N_v \in \mathcal{N}(v)$, then we have $N(v) = N_v$.

         Otherwise, we have $\bigcap_{S_j \in \mathcal{N}(v)} S_j = \{v_1, v_2\} \subset N(v)$ and we just need to distinguish $v_3 \in N(v) \setminus \{v_1, v_2\}$ from all the $w_j$. Note that there can be at most two such $w_j$, say $w_1$ and $w_2$, because of the 3-regularity condition, and that $v_3$ is only contained in the element $N(v) \in \mathcal{N}(v)$. Suppose there are three elements in $\mathcal{N}(v)$ and thus both such $w_1$ and $w_2$ exist. Then $v_3$ cannot be adjacent to $v_1$ or $v_2$, and thus $\{v_1, v_2\}$ is a subset of some element in $\mathcal{N}(w_1)$ and $\mathcal{N}(w_2)$ but not of $\mathcal{N}(v_3)$.

         Now, suppose $\mathcal{N}(v)$ contains two elements and there exists only one of such $w_1$ and $w_2$ mentioned above. Without loss of generality, let it be $w_1$. If there exists $x \in V(G)$ with $N(x) = N(w_1)$, which happens if and only if elements in $\mathcal{N}(w_1)$ would all have cardinality four, then $v$ would not be contained in any element of $\mathcal{N}(w_1)$. Thus in this case we can distinguish $w_1$ from the neighbors of $v$. If no such $x$ exists, then the cardinality of elements in $\mathcal{N}(w_1)$ is three and at least two of them have intersection $\{v_1, v_2\}$. We can easily check that if the same were true for $v_3$, then $v_3$ would have to be adjacent to both $v_1$ and $v_2$. Then note that $\mathcal{N}(w_1) = \{ N(w_1), \{v_1, v_2, v\}, \{v_1, v_2, v_3\} \}$ would have three elements whereas $\mathcal{N}(v_3) = \{ N(v_3) = \{v,v_1,v_2\}, \{v_1, v_2, w_1\} \}$ would only have two. Thus, we can differentiate between $v_3$ and $w_1$.
    \end{proof}

 \begin{lem}
        If $G$ is a 4-regular planar graph on $n \geq 7$ vertices, then there exists vertex $v \in V(G)$ such that every element in $\mathcal{N}(v)$ has order 4.
    \end{lem}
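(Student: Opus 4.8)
The plan is to argue by contradiction, after first translating the conclusion into a statement about \emph{twin} vertices. By the characterisation in Observation 3.2, every element of $\mathcal{N}(v)$ has order $4$ precisely when $N(v)$ is already maximal, i.e. when there is no vertex $w \notin N[v]$ adjacent to all four vertices of $N(v)$. In a $4$-regular graph such a $w$ has degree exactly $4$, so $w$ being adjacent to all of $N(v)$ forces $N(w) = N(v)$; thus the existence of such a $w$ is exactly the existence of a distinct vertex with the same neighbourhood as $v$. Hence the lemma can fail only if \emph{every} vertex of $G$ has a twin, and I would derive a contradiction from this assumption, paralleling the analogous lemma for $3$-regular graphs.

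Next I would analyse the resulting twin structure. Writing $u \approx u'$ when $N(u) = N(u')$ gives an equivalence relation whose classes are independent sets, all of whose members share one common $4$-element neighbourhood $N$. If a class had size $\geq 3$, then three of its vertices together with three vertices of $N$ would span a $K_{3,3}$, contradicting planarity; so every class has size at most $2$, and the standing assumption forces every class to have size exactly $2$. Thus $V(G)$ splits into twin pairs. Moreover each neighbourhood is a union of such pairs: if $y \in N(u)$ then $u \in N(y) = N(y')$, so the twin $y'$ also lies in $N(u)$. Since $|N(u)| = 4$, every $N(u)$ consists of exactly two twin pairs.

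I would then pass to the quotient on pairs. Define a graph $H$ whose vertices are the twin pairs, with two pairs joined exactly when their vertices are (completely) adjacent in $G$; the previous step shows $H$ is loopless, simple, and $2$-regular, hence a disjoint union of cycles. As $G$ is connected, $H$ must be a single cycle $C_\ell$ with $\ell = n/2$, and $G$ is recovered as the blow-up $C_\ell[\overline{K_2}]$, in which each vertex of $C_\ell$ is replaced by two non-adjacent vertices and each edge by a $K_{2,2}$. Because $n \geq 7$ is even we have $n \geq 8$, so $\ell \geq 4$.

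The crux, and the step I expect to be the main obstacle, is showing that $C_\ell[\overline{K_2}]$ is non-planar for every $\ell \geq 4$; this yields the contradiction and finishes the proof. Labelling three consecutive pairs $P_1 = \{a_1,a_2\}$, $P_2 = \{b_1,b_2\}$, $P_3 = \{c_1,c_2\}$ and contracting the connected arc $P_4 \cup \cdots \cup P_\ell$ (a single pair when $\ell = 4$) to one vertex $d$, the vertex $d$ becomes adjacent to both $a_1, a_2$ (through $P_\ell$) and to $c_1$ (through $P_4$); then $\{a_1, a_2, c_1\}$ versus $\{b_1, b_2, d\}$ realises a $K_{3,3}$ minor. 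It is worth noting that the hypothesis $n \geq 7$ is genuinely needed: when $\ell = 3$, so $n = 6$, the blow-up is the octahedron $K_{2,2,2}$, which is $4$-regular and planar, so the statement indeed fails for $n = 6$, and the argument above is exactly what rules this case out.
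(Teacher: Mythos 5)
Your argument is correct in substance but takes a genuinely different, more structural route than the paper. The paper fixes a single twin pair $v,w$ with $N(v)=N(w)$ and runs a case analysis on how many edges $N(v)$ spans internally: a vertex of $N(v)$ with two neighbours inside $N(v)$ forces an octahedron component (excluded by connectivity and $n\geq 7$), exactly one internal edge leaves some $v_4$ with no possible twin, and the independent case is handled by showing that fresh twin neighbours must be introduced forever, contradicting finiteness. You instead globalize: failure of the lemma forces every vertex to have a non-adjacent twin, planarity caps twin classes at size $2$, every neighbourhood is a union of two twin pairs, and the quotient on pairs is a connected simple $2$-regular graph, so $G\cong C_\ell[\overline{K_2}]$ with $\ell=n/2\geq 4$; non-planarity of these blow-ups finishes the proof. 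This buys a cleaner argument (the paper's infinite-descent step is replaced by the observation that the pair-quotient is a single cycle) and makes transparent why $n=6$, the octahedron $C_3[\overline{K_2}]=K_{2,2,2}$, is the unique exception. One small repair is needed in your final step: for $\ell=4$ the ``arc'' $P_4$ is a twin pair, hence an independent set, so it is \emph{not} connected and cannot be contracted to a single branch vertex. But no contraction is needed there: a single vertex $d_1\in P_4$ is already adjacent to all of $P_1\cup P_3$, so $\{a_1,a_2,c_1\}$ versus $\{b_1,b_2,d_1\}$ gives a $K_{3,3}$ subgraph outright (indeed $C_4[\overline{K_2}]\cong K_{4,4}$). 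With that one-line fix the proof is complete.
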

    \begin{proof}
        Suppose, for contradiction, that this is not the case. Then for every $v \in V(G)$, there exists $w \neq v \in V(G)$ such that $N(v) = \{v_1, v_2, v_3, v_4\} = N(w)$. Fix a $v \in V(G)$ and let $w \neq v \in V(G)$ be such that $N(w) = N(v) = \{v_1, v_2, v_3, v_4\}$.

        If there exists $v_1 \in N(v)$ that is adjacent to two other vertices in $N(v)$, say $v_2$ and $v_3$, then $v_4$ is the only vertex that can play the role of satisfying $N(v_4) = N(v_1)$. Hence $G[\{v,w, v_1, v_2, v_3, v_4\}]$ would give the unique (up to isomorphism) 4-regular planar graph on 6 vertices and thus forms a component of $G$. But $G$ has $n \geq 7$ vertices, which means that it will have more than one component, a contradiction.

        Otherwise, suppose there exists $v_1 \in N(v)$ that is adjacent to one other vertex in $N(v)$, say $v_2$. One of $v_3$ and $v_4$ has to have the same set of neighbors as $v_1$. Without loss of generality, assume it's $v_3$. But now all vertices in $\{v, w, v_1, v_2, v_3\}$ have four neighbors already. Then clearly, no vertex other than $v_4$ can have the same set of neighbors as does $v_4$, which is a contradiction.

        If not, then no two vertices in $N(v)$ can be adjacent to each other. Then $N(v)$ can be partitioned into pairs that have the same set of neighbors. Suppose $N(v_1) = \{v, w, x_1, x_2\} = N(v_4)$ and $N(v_2) = \{v, w, y_1, y_2\} = N(v_3)$. Observe that we would necessarily have $N(x_1) = N(x_2)$ and $N(y_1) = N(y_2)$. If $y_1$ and $y_2$ are adjacent to $x_1$ and $x_2$, then, after contracting the path $v_4vv_3$ to $v_4v_3$, $G$ contains a $K_{3,3}$ minor with the vertex set being $\{y_1, y_2, v_3, v_4, x_1, x_2\}$, a contradiction. So we need distinct new vertices $y_3$ and $y_4$ to be the other two neighbors of both $y_1$ and $y_2$. If $x_1$ and $x_2$ are adjacent to $y_3$ and $y_4$, then contracting the path $v_4vv_2y_2$ to $v_4y_2$ gives that $G$ contains a $K_{3,3}$ minor with the vertex set being $\{y_3, y_4, v_4, y_2, x_1, x_2\}$, which is also a contradiction. Hence we also need new vertices, say $x_3$ and $x_4$, to be the last two neighbors of both $x_1$ and $x_2$. Applying similar arguments repeatedly shows that we will always need more new vertices to be the last two neighbors of $x_i, x_{i+1}$ and $y_j, y_{j+1}$, respectively. This gives a contradiction because our graph is finite.
    \end{proof}

    \begin{lem}
        The class of 4-regular planar graphs on greater or equal to 7 vertices is $T_3$-reconstructible.
    \end{lem}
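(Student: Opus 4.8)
The plan is to recover the neighbourhood $N(v)$ of every vertex $v$ from the family $\mathcal{N}(v)$, after which the edge set of $G$ is determined and symmetry of the recovered neighbourhoods provides a consistency check. Fix $v$. As $G$ is $4$-regular we have $|N(v)|=4$, and every element of $\mathcal{N}(v)$ has order $4$ or $5$ (an order-$5$ element requires a vertex adjacent to all four of $N(v)$). First I would note that planarity forbids two distinct vertices $w_1,w_2\notin N[v]$ from both being adjacent to all of $N(v)$, since together with $v$ they would give a $K_{3,3}$ with parts $\{v,w_1,w_2\}$ and any three vertices of $N(v)$; hence the ``all elements of the form $N(v)\cup\{w_i\}$'' alternative of Observation 3.2 collapses, and whenever an order-$5$ element is present it is the unique element of $\mathcal{N}(v)$. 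This gives three cases: (i) a single order-$4$ element, which by Observation 3.2 must be $N(v)$ itself; (ii) a single order-$5$ element $N_v=N(v)\cup\{w\}$ with $N(w)=N(v)$; and (iii) several order-$4$ elements, one equal to $N(v)$ and the others of the form $(N(v)\setminus\{v_i\})\cup\{w_j\}$ with $w_j\notin N[v]$ adjacent to $N(v)\setminus\{v_i\}$.

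In case (ii) I would reuse the device from the $3$-regular argument and show that $w$ is the unique $s\in N_v$ for which $(N_v\cup\{v\})\setminus\{s\}\in\mathcal{N}(s)$. That $w$ works is immediate: $N(w)=N(v)$ forces $v$ adjacent to all of $N(w)$, so $N(w)\cup\{v\}=(N_v\cup\{v\})\setminus\{w\}$ is the order-$5$ $T_3$-neighbourhood of $w$. Conversely, if a true neighbour $v_i$ had this property, then $(N_v\cup\{v\})\setminus\{v_i\}$ would be an order-$5$ element of $\mathcal{N}(v_i)$, hence equal to $N(v_i)\cup\{w'\}$ with $N(w')=N(v_i)$; since both $v$ and $w$ lie in $N(v_i)$, tracing adjacencies forces $\{v,w,v_i,\dots\}$ to induce the octahedron $K_{2,2,2}$ on six vertices, which being $4$-regular would be a connected component of $G$ and contradict $n\geq 7$. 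Deleting the identified $w$ from $N_v$ returns $N(v)$.

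Case (iii) carries the real work, and I would treat it with a counting invariant supported by planarity. For a pair $u,v$ let $c(u,v)$ be the number of connected triples containing both; this is read off $T_3$, and a short computation gives $c(u,v)=6-|N(u)\cap N(v)|$ when $uv\in E(G)$ and $c(u,v)=|N(u)\cap N(v)|$ otherwise. Two planarity facts organise the case. First, distinct false elements must delete distinct neighbours of $v$ and use distinct vertices $w_j$: two false elements sharing a deleted neighbour would give $w_j,w_k$ and $v$ all adjacent to the same three vertices of $N(v)$, a $K_{3,3}$. Second, if $v$ and a neighbour $n$ share three common neighbours then no false element can delete $n$, for its added vertex together with $v$ and $n$ would all be adjacent to those three, again a $K_{3,3}$. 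From the first fact each $w_j$ lies in exactly one element of $\mathcal{N}(v)$, while each true neighbour is missing from at most one element. Hence if $\mathcal{N}(v)$ has at least three elements, $N(v)$ is exactly the set of vertices lying in at least two of them. If it has exactly two elements, $N(v)$ and $(N(v)\setminus\{v_i\})\cup\{w\}$, they share three vertices and differ only in $v_i$ versus $w$; the second fact forces $|N(v)\cap N(v_i)|\leq 2$, so $c(v,v_i)\geq 4$, whereas $w\notin N(v)$ with $N(w)\neq N(v)$ (else an order-$5$ element would appear) gives $c(v,w)=3$, and the strictly larger count identifies the genuine neighbour.

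The obstacle I expect is precisely this last ambiguity: the simple criterion ``$N(v)$ is the element with no deletable vertex'', which sufficed in the $5$-connected setting, breaks down here when $v$ has a neighbour adjacent to its three other neighbours, and the count $c$ cannot on its own separate an edge with three common neighbours from a non-edge with three common neighbours (both give $c=3$). The point of the two planarity facts is that such coincidences are incompatible with the existence of the offending false element, so the ambiguity never actually occurs. Assembling $N(v)$ over all $v$ then reconstructs $G$.
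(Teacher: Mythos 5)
Your proposal is correct, and for most of the argument it runs parallel to the paper's: the same trichotomy from Observation 3.2 (single order-$4$ element, single order-$5$ element, several order-$4$ elements), the same $K_{3,3}$ obstruction to two external vertices dominating $N(v)$, the same identification of $w$ in the order-$5$ case via $(N_v\cup\{v\})\setminus\{s\}\in\mathcal{N}(s)$ with the octahedron-component contradiction against $n\geq 7$, and an equivalent criterion when $\mathcal{N}(v)$ has at least three elements (your ``lies in at least two elements'' versus the paper's ``unique element meeting all others in three vertices''). Where you genuinely diverge is the hardest subcase, $\mathcal{N}(v)=\{N(v),(N(v)\setminus\{v_4\})\cup\{w\}\}$. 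The paper distinguishes $w$ from a true neighbour by structural properties of $\mathcal{N}(w)$ itself (it has multiple size-four elements, contains $(S\cup\{v\})\setminus\{w\}$, and $v$ lies in exactly one of its elements), then rules out by a further case analysis that any $v_i\in N(v)$ shares all three properties. You instead introduce the pair-count $c(v,\cdot)$, verify the formulas $6-|N(u)\cap N(v)|$ for edges and $|N(u)\cap N(v)|$ for non-edges under $4$-regularity, and observe that the existence of the false element forces $|N(v)\cap N(v_4)|\leq 2$ (else $\{v,v_4,w\}$ against $\{v_1,v_2,v_3\}$ is a $K_{3,3}$ subgraph) while maximality of $N(v)$ forces $|N(v)\cap N(w)|=3$, so $c(v,v_4)\geq 4>3=c(v,w)$ separates the two candidates. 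This buys a shorter, more quantitative resolution of the ambiguity that avoids the paper's second-order analysis of $\mathcal{N}(w)$, at the cost of being tied to the regularity hypothesis; the paper's purely set-theoretic criterion is closer in spirit to the arguments used for the $3$- and $5$-regular cases. Both arguments are sound.
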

    \begin{proof}
        Fix a vertex $v \in V(G)$. Suppose $\mathcal{N}(v)$ has only one element $N_v$. Then $N(v) = N_v$ if $|N_v| = 4$. Otherwise, $N_v = N(v) \cup \{w\}$ for the unique $w \notin N(v)$ with $N(v) = N(w)$. Note that $w$ satisfies that $(N_v \cup \{v\}) \setminus \{w\} \in \mathcal{N}(w)$. If there exists $v_1 \in N(v)$ that also satisfies this, then $v_1$ would be adjacent to two other vertices in $N(v)$. As we have shown in the proof of Lemma 3.9, this leads to $\{N(v) \cup \{v, w\} \}$ forming a connected component of size 6, which implies that $G$ with $n \geq 7$ vertices would not be connected, a contradiction. Hence we can recognize $w$ and thus determine $N(v)$ in this case.

        On the other hand, suppose $\mathcal{N}(v)$ has multiple elements. Then $N(v)$ is an element in $\mathcal{N}(v)$ and all the other elements would be of the form $S_j \coloneqq (N(v) \setminus \{v_i\}) \cup \{w_j\}$, where $w_j \notin N(v)$ and $N(v) \setminus \{v_i\} \subset N(w_j)$. The other case for multiple elements recognized in Observation 3.2 would give a $K_{3,3}$ minor, a contradiction. If $\mathcal{N}(v)$ has $\geq 3$ elements, then we can recognize $N(v)$ as the unique element that has an intersection of size three with all other elements, respectively. Now suppose $\mathcal{N}(v)$ has exactly two elements: $N(v) = \{v_1, v_2, v_3, v_4\}$ and $S \coloneqq (N(v) \setminus \{v_4\}) \cup \{w\}$. To recognize $N(v)$ is equivalent to recognizing the existence of $w$.

        We observe that $\mathcal{N}(w)$ has multiple elements of size four, contains $(S \cup \{v\}) \setminus \{w\}$ as an element, where $S \in \mathcal{N}(v)$, and $v$ is contained in exactly one element of $\mathcal{N}(w)$. Suppose, for contradiction, that the same holds for some $v_1 \in N(v)$. Then either $v_1$ is adjacent to all other vertices in $N(v)$ or all other vertices except for some $v_4 \in N(v)$. The former cannot be possible because for $v$ to exist in exactly one element of $\mathcal{N}(v_1)$ that has multiple elements, there has to exist a vertex $x \neq v_1, v$ that is adjacent to everything in $N(v) \setminus \{v_1\}$, which would result in a $K_{3,3}$ minor, a contradiction. As for the latter, $v_4$ needs to be adjacent to at least three vertices in $N(v_1)$ to be included in some element of $\mathcal{N}(v_1)$. If $v_4$ is adjacent to all four vertices in $N(v_1)$, then elements of $\mathcal{N}(v_1)$ would have size five and not four, which is a contradiction. So $v_4$ is adjacent to three vertices in $N(v_1)$. But in this case $v$ would be contained in both $(N(v_1) \cap N(v_4)) \cup \{v_4\}$ and $N(v_1)$ in $\mathcal{N}(v_1)$, which is also a contradiction.
    \end{proof}

    \begin{lem}
        The class of 5-regular planar graphs on greater or equal to 5 vertices is $T_3$-reconstructible.
    \end{lem}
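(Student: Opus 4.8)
The plan is to assume throughout that we already know $G$ is $5$-regular and planar, so that $d = 5$; this is legitimate because, exactly as in Lemmas 3.7 and 3.9, the degree is recognizable as $\min_{v}\max_{N_v\in\mathcal N(v)}|N_v|$ once we know some vertex $v$ admits no vertex adjacent to all of $N(v)$. (The corresponding existence statement for $d=5$ follows from a parity obstruction: a vertex adjacent to all five neighbours of $v$ is forced by $5$-regularity to be a non-adjacent twin of $v$, no vertex can have two such twins without creating a $K_{3,3}$, so if every vertex had a dominating vertex the whole vertex set would pair off into twins; but then inside the common neighbourhood of any twin pair the five vertices would themselves have to pair off, which is impossible.) Granting this, it suffices to recover $N(v)$ from $\mathcal N(v)$ for each fixed $v$, after which $G$ is determined. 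I would follow the template of Lemmas 3.9 and 3.10 with $d=5$, invoking only that a planar graph has no $K_{3,3}$ subgraph.

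First I would dispose of the case $|\mathcal N(v)|=1$. If the single element $N_v$ has order $5$ it equals $N(v)$; if it has order $6$ then $N_v=N(v)\cup\{w\}$, and by $5$-regularity $w$ is a non-adjacent twin of $v$, i.e.\ $N(w)=N(v)$. I would single out $w$ as the unique $s\in N_v$ for which $(N_v\cup\{v\})\setminus\{s\}\in\mathcal N(s)$: this holds for $w$ since $N(w)\cup\{v\}$ is a genuine order-$6$ $T_3$-neighbourhood of $w$, and it fails for every true neighbour $v_i$, because by Observation 3.2 applied to $v_i$ such a neighbourhood would force $N(v_i)$ to consist of five of the six vertices of $(N(v)\setminus\{v_i\})\cup\{v,w\}$ with the sixth adjacent to all of $N(v_i)$. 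Tracing the two possibilities for the omitted vertex, one either makes $w$ adjacent to $v$ (impossible, as $w$ is a non-adjacent twin) or produces three vertices each joined to three common vertices, a $K_{3,3}$. Deleting $w$ then returns $N(v)$.

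When $|\mathcal N(v)|>1$, Observation 3.2 leaves two possibilities, and the ``all elements of the form $N(v)\cup\{w_i\}$'' possibility is killed at once: two such $w_i$ together with $v$ would be three vertices adjacent to all five of $N(v)$, yielding $K_{3,5}$. Hence $N(v)\in\mathcal N(v)$ and every other element is $S_j=(N(v)\setminus\{v_{i(j)}\})\cup\{w_j\}$ with $w_j\notin N(v)$. Since three vertices $v,w_j,w_{j'}$ cannot share four common neighbours in a planar graph, distinct $S_j$ delete distinct $v_{i(j)}$, so $|S_j\cap S_{j'}|=3$ while $|N(v)\cap S_j|=4$; thus when $\mathcal N(v)$ has at least three elements I would name $N(v)$ as the unique element meeting every other one in exactly four vertices.

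The main obstacle is the remaining case $\mathcal N(v)=\{N(v),S\}$ with $N(v)$ and $S=(N(v)\setminus\{v_5\})\cup\{w\}$, where the two symmetric-difference vertices $v_5,w$ look interchangeable and I must decide which is the genuine fifth neighbour. I would resolve this by a triple count: for any pair, the number $t(v,x)$ of connected triples of $T_3$ containing $\{v,x\}$ equals $8-|N(v)\cap N(x)|$ if $vx$ is an edge and $|N(v)\cap N(x)|$ if it is not. Here $w$ is adjacent to $v_1,\dots,v_4$ but, since $N(v)$ is maximal in $\mathcal N(v)$, not to $v_5$ and not to $v$, so $t(v,w)=4$; the key planarity input is that $v_5$ cannot be adjacent to all of $v_1,\dots,v_4$, for then $v,v_5,w$ would share those four neighbours and give a $K_{3,3}$. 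Hence $|N(v)\cap N(v_5)|\le 3$ and $t(v,v_5)=8-|N(v)\cap N(v_5)|\ge 5>4=t(v,w)$, so the vertex of $\{v_5,w\}$ lying in more connected triples with $v$ is the true neighbour, and the element of $\mathcal N(v)$ containing it is $N(v)$. Carrying this out at every vertex reconstructs $G$, so distinct $5$-regular planar graphs have distinct $T_3$. I expect the fiddliest checking to be in the single-element order-$6$ case above; the two-element triple-count is the genuinely new ingredient compared with Lemma 3.10.
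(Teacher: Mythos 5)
Your proof is correct, and while the overall skeleton (case split on $|\mathcal{N}(v)|$, Observation 3.2, $K_{3,3}$ obstructions from planarity) matches the paper's, you resolve the hardest sub-case by a genuinely different device. The paper distinguishes the impostor $w$ in $\mathcal{N}(v)=\{N(v),S\}$ by comparing structural features of $\mathcal{N}(w)$ against $\mathcal{N}(v_i)$ for true neighbours $v_i$ (how many elements they have, which ones contain $v$); you instead count, directly from $T_3$, the number $t(v,x)$ of connected triples containing the pair $\{v,x\}$, and use $5$-regularity to get the clean inequality $t(v,v_5)=8-|N(v)\cap N(v_5)|\geq 5>4=t(v,w)$, the planarity input being only that $v,v_5,w$ cannot share the four neighbours $v_1,\dots,v_4$. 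This is arguably cleaner and more robust than the paper's argument, whose ``it's easy to check'' step is left to the reader; your single-element case is essentially identical to the paper's (the same $K_{3,3}$ with parts $\{v,w,v_i\}$ and three common neighbours). Two small remarks: the case $|\mathcal{N}(v)|\geq 3$ that you handle via pairwise intersection sizes is actually vacuous here, since $v$ together with two vertices $w_j,w_{j'}$ each adjacent to four of the five vertices of $N(v)$ already forces a $K_{3,3}$ (the paper notes that multiple elements means exactly two); and your opening paragraph on recognizing $d=5$ is not needed for this lemma, which presupposes membership in the class of $5$-regular planar graphs — degree recognition is deferred to Theorem 3.12 in the paper's organization (though your parity argument for it appears sound).
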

    \begin{proof}
        Observe that if $\mathcal{N}(v)$ has multiple elements, then it has to have exactly two elements: $S_1 \coloneqq N(v)$ and $S_2 \coloneqq (N(v) \setminus \{v_1\}) \cup \{w\}$ for some $v_1 \in N(v)$ and $w \notin N(v)$ with $(N(v) \setminus \{v_1\}) \subset N(w)$, otherwise our planar graph would contain a $K_{3,3}$ minor, a contradiction. It's easy to check that $\mathcal{N}(w)$ has multiple elements, including $(S \cup \{v\}) \setminus \{w\}$ for some $S \in \mathcal{N}(v)$ that contains $w$, and that $v$ would belong to exactly one element in $\mathcal{N}(w)$. However, the same cannot be all true for any $v_i \in N(v)$. Thus we will be able to determine the existence of $w$ in elements of $\mathcal{N}(v)$ and pick out $N(v)$.

        Now suppose $\mathcal{N}(v)$ has only one element $N_v$. If $N_v$ has size 5, then we have $N(v) = N_v$. Else, $N_v = N(v) \cup \{w\}$, where $N(w) = N(v)$. We want to be able to recognize $w$ in $N_v$. Clearly, we have that there exists $N_w \in \mathcal{N}(w)$ such that $N_w = (N_v \cup \{v\}) \setminus \{w\}$. Suppose, for contradiction, that there exists $v_i \in N(v)$ with $(N_v \cup \{v\}) \setminus \{v_i\} \in \mathcal{N}(v_i)$. Then $v_i$ needs to be adjacent to three vertices within $N(v) \setminus \{v_i\}$. But this would mean that $G$ contains a $K_{3,3}$ minor, which is a contradiction.
    \end{proof}

    Piecing together the lemmas above, we obtain a proof of Theorem 3.12 below.

\begin{thm}
    The class of regular planar graphs on greater or equal to 7 vertices is $T_3$-reconstructible.
\end{thm}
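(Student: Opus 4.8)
The plan is to reduce the statement to the single-degree results already established, the reduction being a degree-recognition step. Since $G$ is connected, planar, and $d$-regular on $n \ge 7$ vertices, planarity forces the minimum degree to be at most $5$ (so $d \le 5$), while connectivity on $n \ge 7$ vertices rules out $d \le 1$; hence $d \in \{2,3,4,5\}$. It therefore suffices to (i) recover the common degree $d$ from $T_3(G)$, using only that $G$ is regular and planar, and then (ii) invoke the appropriate per-degree result. I would first note that the vertex set is recovered as $V(G) = \bigcup T_3(G)$, since every vertex of degree at least $2$ lies in a connected triple with two of its neighbors; consequently two regular planar graphs with equal $T_3$ automatically share the same vertex set and the same $n$.

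For step (i) I would read off $d$ from the sizes of the $T_3$-neighborhoods. By Observation 3.2 every element of every $\mathcal{N}(v)$ has cardinality either $d$ or $d+1$, so the minimum cardinality $m$ of any element of any $\mathcal{N}(v)$ satisfies $m \ge d$. I claim $m = d$, so that $d$ is simply the smallest $T_3$-neighborhood size occurring anywhere in $G$. Equality $m = d$ amounts to exhibiting a single vertex $v$ for which $N(v)$ itself is a (maximal) $T_3$-neighborhood, i.e.\ for which no $w \notin N[v]$ is adjacent to all of $N(v)$. This is exactly the content of the existence Lemmas 3.6 (for $d=3$) and 3.9 (for $d=4$).

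The remaining degrees are handled directly. For $d=2$ the graph is a single cycle $C_n$, and one checks that each $\mathcal{N}(v)$ is the single set $N(v)$ of size $2$, so $m=2$. For $d=5$ I would argue by a twin/parity argument: if $m=6$ then for every vertex $v$ every element of $\mathcal{N}(v)$ has size $6$, which by Observation 3.2 means some $w \notin N[v]$ is adjacent to all of $N(v)$; $5$-regularity then forces $N(w)=N(v)$, so $w$ is a non-adjacent twin of $v$. Planarity bounds each twin class to size at most $2$, since three mutual twins together with three common neighbors would contain a $K_{3,3}$. Thus, under the assumption $m=6$, the vertex set partitions into twin-pairs; but for any such pair with common neighborhood $S$, the twin of each vertex of $S$ again lies in $S$, forcing $|S|$ to be even and contradicting $|S|=5$. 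Hence $m=5=d$ here as well, and in all cases $d = m$ is determined by $T_3$.

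For step (ii), once $d$ is known the conclusion follows from the per-degree results: for $d=2$ the cycle $C_n$ with $n \ge 7$ is triangle-free, hence $T_3$-reconstructible by the theorem of Bastide et al.\ on triangle-free graphs; for $d=3,4,5$ the graph is $T_3$-reconstructible by Lemmas 3.7, 3.10, and 3.11 respectively. Thus if $G_1, G_2$ are regular planar graphs on $n \ge 7$ vertices with $T_3(G_1)=T_3(G_2)$, then they share the same vertex set and, by step (i), the same degree $d$, and the relevant $d$-regular lemma yields $G_1=G_2$. I expect the only genuine obstacle to be step (i), and specifically the verification that the minimum $T_3$-neighborhood size equals $d$ for every admissible degree: the cases $d=3,4$ are precisely why Lemmas 3.6 and 3.9 were proved, while the $d=5$ case rests on the parity argument above.
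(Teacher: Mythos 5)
Your proposal is correct and follows the same overall architecture as the paper's proof: recover the common degree $d \in \{2,3,4,5\}$ from $T_3$, then invoke the per-degree reconstruction lemmas. The only genuine difference is the degree-recognition mechanism. The paper tests degrees in increasing order --- first whether $T_3$ has the explicit cyclic form ($d=2$), then whether some vertex has all its $T_3$-neighborhoods of size $3$ ($d=3$, via the cubic existence lemma), then size $4$ (via the quartic existence lemma for $n\ge 7$) --- and declares $d=5$ purely by elimination; since $N(v)$ is always contained in a maximum-cardinality $T_3$-neighborhood, every element of every $\mathcal{N}(v)$ has size at least $d$, so a graph of larger degree can never pass a smaller-degree test and no existence statement for $d=5$ is needed. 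Your criterion, that $d$ equals the minimum cardinality of a $T_3$-neighborhood over all vertices, is equivalent but forces you to also show that a $5$-regular planar graph has a vertex $v$ with no nonadjacent vertex dominating $N(v)$. Your twin-parity argument supplies exactly this and is correct: if every vertex had such a dominator, $5$-regularity makes it a nonadjacent twin, planarity caps twin classes at size $2$ (else a $K_{3,3}$), and the induced involution on the common neighborhood $S$ of a twin pair would force $|S|$ even, contradicting $|S|=5$. So you end up proving a small existence lemma that the paper deliberately sidesteps; both routes buy the same conclusion, yours at the cost of one extra (but clean) argument, the paper's at the cost of having to order the checks. Two harmless slips: for a cycle, $\mathcal{N}(v)$ actually has three elements (e.g.\ $\{v_{i-2},v_{i-1}\}$ in addition to $N(v)$), though all have size $2$ so your conclusion $m=2$ stands; and your citations of the $3$-regular lemmas are off by one relative to the paper's numbering (existence and reconstructibility are Lemmas 3.7 and 3.8 there).
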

\begin{proof}

    If $T_3(G) = \{ \{v_{i}, v_{i+1}, v_{i+2} \} \mid i \in \mathbb{Z}/n\mathbb{Z} \}$, then $d =2$ and in particular $G$ is the cycle $v_0v_1v_2...v_{n-2}v_{n-1}v_0$. This is proved in Observation 4.3.

    Otherwise if there exists vertex $v \in V(G)$ such that every element in $\mathcal{N}(v)$ has order three, then we know $d = 3$. And by Lemma 3.7, we will always be able to recognize when $d = 3$ this way.

    Otherwise if there exists vertex $v \in V(G)$ such that every element in $\mathcal{N}(v)$ has order four, then we know $d = 4$. Lemma 3.9 shows that we will always be able to recognize when $d = 4$ this way.

    Otherwise, $d = 5$. Lemma 3.11, together with Lemma 3.10 and Lemma 3.8, concludes the proof of Theorem 3.12.

\end{proof}

\section{Strongly $T_3$-Reconstructible Graphs}
In this section, we seek to directly answer the question of when reconstruction from connected triples is unique without allowing the additional information of to which classes of graphs our underlying graph of interest belongs. While Bastide et al. have shown that almost every graph can be uniquely reconstructed in this sense, essentially nothing is known about what kind of graphs these are and what it would take to satisfy this unique reconstruction condition. In this section, we explicitly define what we mean by being able to be uniquely reconstructed from $T_3$, give some examples of such graphs, provide a framework for checking if some graph can be uniquely reconstructed from $T_3$, and prove a series of lemmas—including a characterization of triangle-free graphs that can be uniquely reconstructed—that build up to a complete characterization of graphs that can be uniquely reconstructed from $T_3$.

\begin{defn}
    A finite, simple, connected labeled graph $G$ is \emph{strongly $T_3$-recon\\structible} if for any finite, simple, connected labeled graph $H$ that has the same set of connected triples as $G$, we have that $H$ is identical to $G$.
\end{defn}

\begin{quest}
    What kind of graphs are strongly $T_3$-reconstructible?
\end{quest}

The simplest examples are $l$-cycles and $l$-wheels for all $l \geq 5$, the proofs of which are very straightforward and can be found in \cite[Observation 4,5]{BCEGKMV23}. We will include them here for completeness.

\begin{obs}
    All cycles on $n \geq 5$ vertices are strongly $T_3$-reconstructible.
\end{obs}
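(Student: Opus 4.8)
The plan is to compute $T_3(C_n)$ explicitly and then show that any graph $H$ on the same labeled vertex set with $T_3(H) = T_3(C_n)$ must have exactly the edge set of $C_n$. Writing $C_n = v_0 v_1 \cdots v_{n-1} v_0$, a $3$-set induces a connected subgraph of $C_n$ if and only if it consists of three consecutive vertices, so $T_3(C_n) = \{\{v_i, v_{i+1}, v_{i+2}\} \mid i \in \mathbb{Z}/n\mathbb{Z}\}$. In this set every ``distance-$1$'' pair $\{v_i, v_{i+1}\}$ lies in exactly two triples and every ``distance-$2$'' pair $\{v_i, v_{i+2}\}$ lies in exactly one, while pairs at cyclic distance $\geq 3$ lie in none; the goal is to recover from this information alone that the distance-$1$ pairs are precisely the edges. (That $V(H) = V(C_n)$ is forced, since $H$ is connected on $n \geq 5$ vertices and so every vertex lies in some connected triple.)

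First I would show that every edge of $H$ joins two vertices at cyclic distance at most $2$. Indeed, if $\{a,b\} \in E(H)$ then, since $H$ is connected with $n \geq 5$ vertices, at least one endpoint has a further neighbor $c$, so $\{a,b,c\}$ is a connected triple of $H$ and hence an element of $T_3(C_n)$, forcing $a$ and $b$ to be among three consecutive vertices. Thus $E(H)$ is contained in the set of distance-$1$ and distance-$2$ pairs. Next I would bound degrees: for each $v_i$, any two $H$-neighbors of $v_i$ form a connected triple with $v_i$, so $N_H(v_i)$ is contained in some element of the roughly neighbor sets $\mathcal{N}(v_i)$ of Definition 3.1. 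A direct computation shows that for $n \geq 5$ the pairs compatible through $v_i$ are exactly $\{v_{i-2},v_{i-1}\}$, $\{v_{i-1},v_{i+1}\}$, and $\{v_{i+1},v_{i+2}\}$, living on four \emph{distinct} vertices, so each element of $\mathcal{N}(v_i)$ has size $2$ and therefore $\deg_H(v_i) \leq 2$.

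The heart of the argument is then a double count. Let $a$ be the number of distance-$1$ pairs and $b$ the number of distance-$2$ pairs that are edges of $H$; by the first step $|E(H)| = a + b$. Each triple $\{v_i, v_{i+1}, v_{i+2}\} \in T_3(H)$ induces a connected graph on three vertices, hence contains at least two $H$-edges among its internal pairs $\{v_i,v_{i+1}\}$, $\{v_{i+1},v_{i+2}\}$, $\{v_i,v_{i+2}\}$. Summing over all $n$ triples, and noting that each distance-$1$ pair is counted in two triples while each distance-$2$ pair is counted in one, gives $2a + b \geq 2n$. On the other hand, the degree bound yields $\sum_i \deg_H(v_i) \leq 2n$, hence $a + b = |E(H)| \leq n$. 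Subtracting the second inequality from the first gives $a \geq n$, whence $a = n$ and then $b = 0$; that is, every distance-$1$ pair is an edge and no other pair is, so $E(H) = E(C_n)$ and $H = C_n$.

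The step I expect to be the genuine obstacle is the degree bound, since it is the only place where the hypothesis $n \geq 5$ is essential: for $n = 4$ the three compatible pairs above collapse onto fewer vertices, the maximal compatible set can have size $3$, and indeed $C_4$ and $K_4$ share the same connected triples. Once the locality of edges and the degree bound are in hand, combining them with the connectivity of each triple through the double count is routine linear arithmetic.
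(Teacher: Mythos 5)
Your proof is correct, but it takes a genuinely different route from the paper's. The paper argues locally, edge by edge: if a cycle edge $v_iv_{i+1}$ were absent, the two triples $\{v_{i-1},v_i,v_{i+1}\}$ and $\{v_i,v_{i+1},v_{i+2}\}$ force four specific replacement edges, and the existence of a fifth vertex then produces the forbidden triple $\{v_i,v_{i+2},v_{i+3}\}$; if a chord $v_iv_{i+j}$ were present, the two triples $\{v_i,v_{i+1},v_{i+j}\}$ and $\{v_{i-1},v_i,v_{i+j}\}$ would both have to consist of consecutive vertices, forcing $j=2=n-2$, impossible for $n\geq 5$. You instead prove two global structural facts --- every $H$-edge joins vertices at cyclic distance at most $2$, and $\deg_H(v_i)\leq 2$ because the pairs compatible through $v_i$ form a triangle-free path on four distinct vertices --- and then close with the double count $2a+b\geq 2n$, $a+b\leq n$, which pins down $a=n$, $b=0$. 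Both arguments are sound; the paper's is shorter and more elementary, while yours isolates exactly where $n\geq 5$ enters (the four compatible vertices being distinct, so maximal compatible sets have size $2$) and explains cleanly why $C_4$ and $K_4$ are indistinguishable, which the paper's chord case only reveals implicitly through the coincidence $j=2=n-2$. One small point of rigor worth adding: in the degree-bound step you should say explicitly that $N_H(v_i)$ must be a clique in the compatibility graph on $\{v_{i-2},v_{i-1},v_{i+1},v_{i+2}\}$ whose edges are the three listed pairs, and that this graph is a path and hence triangle-free; the appeal to $\mathcal{N}(v_i)$ from Definition 3.1 is not needed and slightly obscures this.
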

\begin{proof}
    \cite[Observation 4]{BCEGKMV23} For any cycle $C_n$, we have $T_3(C_n) = \{ \{v_{i}, v_{i+1}, v_{i+2} \} \mid i \in \mathbb{Z}/n\mathbb{Z} \}$. Let $G$ be a graph with $T_3(G) = T_3(C_n)$. If there exists $i \in \mathbb{Z}/n\mathbb{Z}$ such that $v_{i}v_{i+1}$ is not an edge of $G$, then the fact that $\{v_{i-1}, v_{i}, v_{i+1}\}$ and $\{ v_{i}, v_{i+1}, v_{i+2} \}$ are in $T_3(C_n)$ implies that $v_{i-1}v_{i}, v_{i-1}v_{i+1}, v_{i}v_{i+2},$ and $v_{i+1}v_{i+2}$ are edges of $G$. Since $n \geq 5$, there exists $v_{i+3} \notin \{ v_{i-1}, v_{i}, v_{i+1}, v_{i+2}\}$ such that $v_{i+3}v_{i+2} \in E(G)$. This means that $\{v_{i}, v_{i+2}, v_{i+3}\} \in T_3(G) = T_3(C_n)$, which is a contradiction. On the other hand, suppose there exist $i,j \in \mathbb{Z}/n\mathbb{Z}$, where $j \neq 1$ or $n-1$, such that $v_{i}v_{i+j} \in E(G)$. Then both $\{v_{i}, v_{i+1}, v_{i+j}\}$ and $\{v_{i-1}, v_{i}, v_{i+j}\}$ are in $T_3(G) = T_3(C_n)$. But we would need $2 = j = n-2$ for this to happen, which is impossible for all $n \geq 5$.
\end{proof}

\begin{obs}
    All $l$-wheels are strongly $T_3$-reconstructible for $l \geq 5$.
\end{obs}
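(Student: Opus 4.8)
The plan is to treat an $l$-wheel $W_l$ as a hub vertex $c$ adjacent to every vertex of a rim cycle $v_0 v_1 \cdots v_{l-1} v_0$, and to reconstruct the rim and the spokes separately. First I would record the shape of $T_3(W_l)$. Every triple $\{c, v_i, v_j\}$ is connected, since $c$ is adjacent to both $v_i$ and $v_j$, so all $\binom{l}{2}$ such triples lie in $T_3(W_l)$; a rim-only triple $\{v_a, v_b, v_d\}$ is connected if and only if its three vertices are consecutive on the rim, because two edges among three vertices necessarily form a path $v_a v_b v_d$, forcing $v_a$ and $v_d$ to be the two rim-neighbors of $v_b$, while any non-consecutive triple induces at most one edge and is disconnected. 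Hence $T_3(W_l)$ is the disjoint union of $\{\{c,v_i,v_j\} : i \neq j\}$ and $\{\{v_i, v_{i+1}, v_{i+2}\} : i \in \mathbb{Z}/l\mathbb{Z}\}$. Moreover, any connected graph $G$ with $T_3(G) = T_3(W_l)$ has vertex set equal to the union of its connected triples — every vertex of a connected graph on at least three vertices lies in some connected triple — which is exactly $V(W_l)$; so it remains only to force the edges of $G$.

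Next I would reconstruct the rim. Let $G$ be any graph with $T_3(G) = T_3(W_l)$, and consider the induced subgraph $R \coloneqq G[\{v_0, \dots, v_{l-1}\}]$ on the rim labels. For a rim-only triple, connectivity in $G$ and in $R$ coincide, so $T_3(R)$ equals the set of rim triples of $T_3(G)$, which is exactly $\{\{v_i, v_{i+1}, v_{i+2}\} : i \in \mathbb{Z}/l\mathbb{Z}\} = T_3(C_l)$. Because $l \geq 5$, the argument of Observation 4.3 applies verbatim to $R$: it forces every consecutive edge $v_i v_{i+1}$ to be present and forbids every chord $v_i v_j$ with $v_i, v_j$ non-consecutive, deriving the edge set outright. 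Thus $R = C_l$, i.e.\ on the rim labels $G$ is precisely the cycle $v_0 v_1 \cdots v_{l-1} v_0$.

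It remains to pin down the spokes $c v_i$. Fix any $i$; since $l \geq 4$ there is a rim vertex $v_j$ non-consecutive to $v_i$, so $v_i v_j \notin E(G)$ by the previous step. The triple $\{c, v_i, v_j\}$ lies in $T_3(G)$ and hence induces a connected subgraph with at least two edges among $c v_i, c v_j, v_i v_j$; as $v_i v_j$ is absent, both $c v_i$ and $c v_j$ must be edges. In particular $c v_i \in E(G)$, and as $i$ was arbitrary, $c$ is adjacent to every rim vertex. Since the only candidate edges of $G$ are rim–rim and hub–rim pairs, and we have determined each of them, $E(G) = E(W_l)$ and therefore $G = W_l$.

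The only real delicacy is the rim step. A single connected triple $\{v_i, v_{i+1}, v_{i+2}\}$ does not by itself reveal which two of its three pairs are edges — it could even be a triangle — so reconstructing the rim genuinely requires the interaction between the present (consecutive) and absent (non-consecutive) rim triples, which is exactly what the cycle reconstruction of Observation 4.3 packages. I would therefore lean on that result rather than re-deriving it, taking brief care to note that its proof establishes the edge set directly and so remains valid for the a priori possibly disconnected induced subgraph $R$. Once the rim is identified as the cycle, the spoke step is routine, so I expect no further obstacles.
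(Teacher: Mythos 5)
Your proof is correct and follows essentially the same route as the paper, which compresses the whole argument into the one-line characterization that $G$ is an $l$-wheel iff some vertex $v$ has $G\setminus v$ an $l$-cycle and $v$ in $\binom{l}{2}$ triples: you make the same hub-plus-rim decomposition, invoke the cycle reconstruction (Observation 4.3) for the rim, and force the spokes. The only difference is that you supply the details the paper leaves implicit — the explicit description of $T_3(W_l)$, the remark that the rim argument is purely edge-forcing and so survives the a priori possible disconnectedness of $G[\{v_0,\dots,v_{l-1}\}]$, and the use of a non-consecutive pair $\{c,v_i,v_j\}$ to pin down both spokes — all of which check out.
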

\begin{proof}
      \cite[Observation 5]{BCEGKMV23} A graph $G$ is a $l$-wheel if and only if there exists a vertex $v \in V(G)$ such that $G \setminus v$ is a $l$-cycle and $v$ appears in $\binom{l}{2}$ triples.
\end{proof}

\begin{obs}
    All paths on $n \geq 5$ vertices are strongly $T_3$-reconstructible.
\end{obs}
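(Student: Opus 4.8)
The plan is to mirror the proof of Observation 4.3 for cycles, exploiting the fact that the set of connected triples of the path $P_n = v_1 v_2 \cdots v_n$ is exactly the collection of ``consecutive'' triples $T_3(P_n) = \{\{v_i, v_{i+1}, v_{i+2}\} \mid 1 \le i \le n-2\}$. Let $G$ be any connected labeled graph with $T_3(G) = T_3(P_n)$. Since every vertex of $P_n$ lies in some connected triple while every vertex of a connected graph on at least three vertices does too, $G$ and $P_n$ share the vertex set $\{v_1, \dots, v_n\}$, and it remains only to recover the edges. As in the cyclic case I would argue in two phases: first that every \emph{path edge} $v_i v_{i+1}$ belongs to $E(G)$, and then that $G$ has no other edges.

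For the first phase, suppose some path edge $v_i v_{i+1}$ is missing. If $2 \le i \le n-2$, then both $\{v_{i-1}, v_i, v_{i+1}\}$ and $\{v_i, v_{i+1}, v_{i+2}\}$ lie in $T_3(G)$; since $v_i v_{i+1} \notin E(G)$, connectivity of these induced subgraphs forces $v_{i-1} v_{i+1} \in E(G)$ and $v_{i+1} v_{i+2} \in E(G)$. But then $\{v_{i-1}, v_{i+1}, v_{i+2}\}$ is connected, hence lies in $T_3(G) = T_3(P_n)$, which is impossible as its indices are not three consecutive integers. The endpoint edges $v_1 v_2$ and $v_{n-1} v_n$ require a separate argument: if $v_1 v_2 \notin E(G)$, then the unique triple containing $v_1$, namely $\{v_1, v_2, v_3\}$, forces $v_1 v_3, v_2 v_3 \in E(G)$, and a short argument shows $v_3$ can have no neighbour outside $\{v_1, v_2\}$ (otherwise a connected triple containing $v_1$ other than $\{v_1,v_2,v_3\}$ would appear in $T_3(P_n)$). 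Using $n \ge 5$, the triple $\{v_3, v_4, v_5\} \in T_3(G)$ is then disconnected in $G$, a contradiction; the case $v_{n-1} v_n$ is symmetric.

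For the second phase, suppose $v_a v_b \in E(G)$ with $b \ge a+2$. Having established that all path edges are present, I would pair this chord with a neighbouring path edge to manufacture a forbidden triple: if $a \ge 2$ then $\{v_{a-1}, v_a, v_b\}$ is connected via $v_{a-1}v_a$ and $v_a v_b$, while if $a = 1$ then one of $\{v_1, v_b, v_{b+1}\}$ or $\{v_1, v_{b-1}, v_b\}$ is connected via a path edge incident to $v_b$. In every case the resulting connected triple has indices that are not three consecutive integers and so cannot belong to $T_3(P_n)$, a contradiction. Hence $E(G)$ consists of exactly the path edges, so $G = P_n$, as required.

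The only genuinely delicate point, and the place where the hypothesis $n \ge 5$ is essential, is the treatment of the two endpoints in the first phase: an endpoint vertex sits in only one triple, so the purely local argument available for interior vertices fails, and one must propagate the constraint one or two steps down the path before reaching a contradiction. Everything else is a routine case check paralleling the cyclic argument.
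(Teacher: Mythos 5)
Your proof is correct, but it takes a genuinely different route from the paper's. The paper disposes of this observation in one line, as a corollary of its characterization of strongly $T_3$-reconstructible triangle-free graphs (Theorem 4.10, miscited in the text as ``Theorem 3.8''): a path on $n \geq 5$ vertices is triangle-free and no two non-adjacent vertices share a neighborhood, so the general theorem applies. You instead give a direct, self-contained verification in the style of the cycle argument (Observation 4.3): show every path edge is forced, with a separate two-step propagation at the endpoints where the local argument fails, then rule out chords. Your approach buys independence from the heavier machinery of Section 4 (Lemmas 4.7--4.9 and the family $\FF_1,\dots,\FF_6$) and makes transparent exactly where $n \geq 5$ enters, namely at the endpoints; the paper's approach buys brevity and situates the path as an instance of a broader phenomenon. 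One small imprecision in your chord-elimination step: when $a=1$ and $b=3$, the triple $\{v_1, v_{b-1}, v_b\} = \{v_1,v_2,v_3\}$ \emph{does} have consecutive indices, so your claim that ``in every case the resulting connected triple has indices that are not three consecutive integers'' is not literally true for both candidate triples; you must select $\{v_1, v_b, v_{b+1}\} = \{v_1,v_3,v_4\}$ there (available since $n \geq 4$), and dually $\{v_1,v_{n-1},v_n\}$ when $b=n$. With that selection made explicit the argument is complete.
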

\begin{proof}
   This is a corollary of Theorem 3.8, which we prove later.
\end{proof}

We provide some examples of small graphs that are strongly $T_3$-reconstructible in Figure~\ref{fig:small}. Note that if a graph $G$ contains a strongly $T_3$-reconstructible graph $H$ as an induced subgraph, then we can uniquely reconstruct the embedding of $H$ in $G$ and go from there. \hfill
\break

\begin{figure}
\centering
\includegraphics{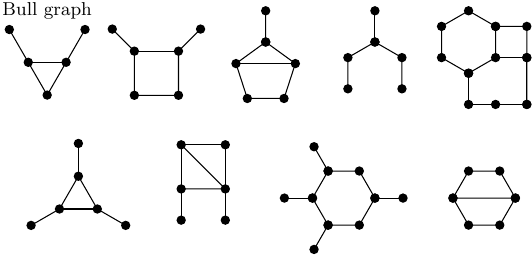}
\caption{Some examples of small graphs that are strongly $T_3$-reconstructible.}
\label{fig:small}
\end{figure}

\begin{defn}
    An edge $uv \in E(G)$ is \emph{necessary} if there does not exist any graph $H$ with $uv \notin E(H)$ and $T_3(H) = T_3(G)$.
\end{defn}

We state the following observation in the form of a lemma:

\begin{lem}
$G$ is strongly $T_3$-reconstructible if and only if it satisfies the following two requirements:
1) Every edge in $G$ is necessary and
2) No additional edge can be added without augmenting $T_3(G)$, which holds if and only if we have $ N(v_1) \neq N(v_2)$ for any two non-adjacent vertices $v_1 \neq v_2\in V(G)$.
\end{lem}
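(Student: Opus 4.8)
The plan is to reduce everything to how $T_3$ behaves under the subgraph order and under the insertion of a single edge. Two elementary facts drive the argument. First, adding edges can only turn disconnected induced triples into connected ones, so $E(G)\subseteq E(H)$ on a common vertex set implies $T_3(G)\subseteq T_3(H)$. Second, if $v_1v_2\notin E(G)$ and $G'=G+v_1v_2$, the only triples whose induced subgraph changes are those of the form $\{v_1,v_2,x\}$: such a triple is connected in $G$ exactly when $x\in N(v_1)\cap N(v_2)$ and connected in $G'$ exactly when $x\in N(v_1)\cup N(v_2)$. Hence the triples created by the insertion are indexed by $N(v_1)\triangle N(v_2)$, so $T_3(G')=T_3(G)$ if and only if $N(v_1)=N(v_2)$. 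This is exactly the parenthetical equivalence in requirement (2): no non-edge can be inserted without augmenting $T_3(G)$ precisely when every pair of non-adjacent vertices has distinct neighborhoods.

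The one gap to watch is that strong $T_3$-reconstructibility only compares $G$ against \emph{connected} graphs, while a failure of requirement (1) a priori produces some possibly disconnected $H$ with $uv\notin E(H)$ and $T_3(H)=T_3(G)$. I would close this with a connectivity lemma: for $G$ connected on $n\geq 3$ vertices, any $H$ on $V(G)$ with $T_3(H)=T_3(G)$ is connected. To see it, let $\Gamma$ be the graph on $V(G)$ whose edges are the pairs lying in some triple of $T_3(G)$. Since $G$ is connected with $n\geq 3$, every edge of $G$ sits in a connected triple, so $E(G)\subseteq E(\Gamma)$ and $\Gamma$ is spanning and connected. Every $\Gamma$-edge comes from a triple that is also connected in $H$, so its endpoints share a component of $H$; connectedness of $\Gamma$ then forces $H$ to be connected. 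Consequently the witnesses appearing in the definition of a necessary edge may be taken connected without loss of generality.

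Both implications are now short. For the forward direction, suppose $G$ is strongly $T_3$-reconstructible. If requirement (2) failed, some non-adjacent $v_1\neq v_2$ would have $N(v_1)=N(v_2)$, making $G'=G+v_1v_2$ a connected graph distinct from $G$ with $T_3(G')=T_3(G)$, contradicting uniqueness; if requirement (1) failed, a non-necessary edge $uv$ would give, via the connectivity lemma, a connected $H\neq G$ with $T_3(H)=T_3(G)$, again a contradiction. For the reverse direction, assume (1) and (2) and take any connected $H$ with $T_3(H)=T_3(G)$. Requirement (1) forces $E(G)\subseteq E(H)$. Were there an edge $v_1v_2\in E(H)\setminus E(G)$, then $G\subseteq G+v_1v_2\subseteq H$ gives $T_3(G)\subseteq T_3(G+v_1v_2)\subseteq T_3(H)=T_3(G)$ by monotonicity, so inserting the single non-edge $v_1v_2$ leaves $T_3$ unchanged, contradicting requirement (2). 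Hence $E(H)=E(G)$ and $H=G$.

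I expect the two implications themselves to be routine once the groundwork is laid; the real work is the bookkeeping of exactly which triples change under a single edge insertion (yielding the $N(v_1)\triangle N(v_2)$ description) and the connectivity lemma that reconciles the ``any graph'' quantifier in the definition of a necessary edge with the ``connected graph'' quantifier in the definition of strong $T_3$-reconstructibility. The connectivity lemma could instead be read off from the glued-set characterization developed in Section 2, but the direct argument through $\Gamma$ is cleaner and self-contained.
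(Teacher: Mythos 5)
Your proposal is correct. The paper states this result without proof (it is introduced as ``the following observation in the form of a lemma''), so there is no argument to compare against line by line; your write-up supplies the standard reasoning one would expect: monotonicity of $T_3$ under edge addition, the observation that inserting a non-edge $v_1v_2$ changes exactly the triples $\{v_1,v_2,x\}$ with $x\in N(v_1)\,\triangle\, N(v_2)$ (which gives the parenthetical equivalence in requirement (2)), and the sandwich $T_3(G)\subseteq T_3(G+v_1v_2)\subseteq T_3(H)$ in the converse direction. One point where you go beyond what the paper even implicitly acknowledges is the quantifier mismatch between Definition 4.6 (``necessary'' quantifies over \emph{all} graphs $H$ with the same triples) and Definition 4.1 (strong $T_3$-reconstructibility quantifies only over \emph{connected} $H$); your auxiliary lemma---that any $H$ with $T_3(H)=T_3(G)$ for connected $G$ on $n\geq 3$ vertices is itself connected, via the auxiliary graph $\Gamma$ whose edges are pairs covered by some connected triple---cleanly closes that gap and makes the two quantifiers interchangeable. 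This is a genuine (if minor) subtlety that the paper leaves implicit, and your handling of it is correct.
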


To understand what graphs are strongly $T_3$-reconstructible, we need to understand the first requirement above. We will find some sufficient conditions for it by looking at several induced subgraphs which our edge could be in that will guarantee the necessity of said edge. Here is a lemma listing simple scenarios, where we know the edge in question would be necessary, that will be important for our full characterization later on.

\begin{figure}[htbp]
\centering
\includegraphics{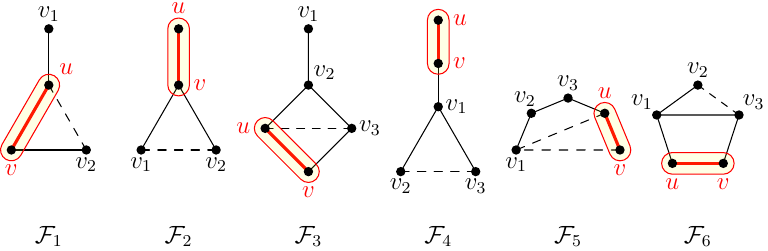}
\caption{The six families that force $uv$ to be necessary. Dotted edges are the ones whose existence we don't care about.}
\label{fig:lemma}
\end{figure}

\begin{lem}
An edge $uv \in E(G)$ is necessary if it is contained in an induced subgraph of $G$ that belongs to one of the families in Figure~\ref{fig:lemma}:
\end{lem}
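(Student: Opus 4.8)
The plan is to isolate a single forcing mechanism and then check that each of the six families triggers it. The mechanism is the following elementary observation about a hypothetical graph $H$ with $T_3(H)=T_3(G)$ in which $uv$ is \emph{not} an edge: whenever $\{u,v,w\}\in T_3(G)=T_3(H)$, the induced subgraph $H[\{u,v,w\}]$ is connected on three vertices, so it carries at least two edges forming a connected graph; since $uv\notin E(H)$, the only available edges are $uw$ and $vw$, and connectivity forces \emph{both} of them into $E(H)$ (a single one would leave the third vertex isolated). Thus the absence of $uv$ compels $H$ to contain $uw$ and $vw$ for every common triple-partner $w$ of the pair $\{u,v\}$.

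From this I would extract the contradiction criterion that drives the whole lemma: if there exist $w_1,w_2$ with $\{u,v,w_1\},\{u,v,w_2\}\in T_3(G)$ but $\{u,w_1,w_2\}\notin T_3(G)$ (or, symmetrically, $\{v,w_1,w_2\}\notin T_3(G)$), then $uv$ is necessary. Indeed, were $uv$ absent in some $H$ with $T_3(H)=T_3(G)$, the previous paragraph would put $uw_1,uw_2$ (respectively $vw_1,vw_2$) into $E(H)$; then $H[\{u,w_1,w_2\}]$ is connected through the path $w_1uw_2$, so $\{u,w_1,w_2\}\in T_3(H)=T_3(G)$, contradicting our hypothesis. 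Hence no such $H$ exists and $uv$ is necessary.

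It then remains to verify that each of the six families in Figure~\ref{fig:lemma} exhibits such a pair. For every family I would read off two \emph{solid}-edge neighbours $w_1,w_2$ of $\{u,v\}$ for which the triples $\{u,v,w_1\}$ and $\{u,v,w_2\}$ are forced to lie in $T_3(G)$, and then confirm that the ``diagonal'' triple $\{u,w_1,w_2\}$ (or $\{v,w_1,w_2\}$) is disconnected in $G$, i.e. carries at most one edge among its three pairs. The point that keeps this bookkeeping clean is that every triple used in the argument—$\{u,v,w_1\}$, $\{u,v,w_2\}$, and the forbidden diagonal—is determined entirely by the solid edges of the picture, so the dotted edges, whose presence we have declared irrelevant, never enter the verification.

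The main obstacle I anticipate is purely in the case analysis rather than in any single deduction: I must ensure that in each family the chosen $w_1,w_2$ genuinely force $\{u,v,w_i\}\in T_3(G)$ independently of the dotted edges (which is automatic once $w_i$ is solidly adjacent to $u$ or to $v$, as $uv\in E(G)$ already connects the triple), and that the forbidden diagonal really contains at most one solid edge so that it cannot be a connected triple. Should some family fail to supply two partners with an immediately forbidden diagonal, I would fall back on a one-step propagation, combining the forced edges $uw,vw$ from a single partner $w$ with a second solidly forced edge to produce a connected triple of $H$ absent from $T_3(G)$. I expect, however, that every family in Figure~\ref{fig:lemma} is a direct instance of the two-partner criterion, so the proof reduces to six short, parallel verifications.
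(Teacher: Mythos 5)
Your forcing mechanism is correct and is exactly the engine of the paper's proof: if $uv\notin E(H)$ and $\{u,v,w\}\in T_3(G)=T_3(H)$, then both $uw$ and $vw$ must lie in $E(H)$. Your two-partner criterion is also a valid sufficient condition, and it does dispose of $\FF_1$ and $\FF_2$ in one step (e.g.\ in $\FF_2$ the forced edges $uv_1,uv_2$ make $\{v_1,u,v_2\}$ connected in $H$ even though it has at most the one dotted edge in $G$). The gap is in your final expectation that \emph{every} family is a direct instance of this criterion; it is not, and your fallback as described does not cover the remaining cases. In $\FF_4$ the pair $\{u,v\}$ has only \emph{one} triple-partner inside the depicted subgraph (the vertex $v_1$ on the path), so the criterion cannot even be invoked; the paper instead forces $uv_1\in E(H)$, then forces the \emph{non}-edges $v_1v_2,v_1v_3\notin E(H)$ (to avoid creating $\{u,v_1,v_2\}$ and $\{u,v_1,v_3\}$, which are not in $T_3(G)$), and derives the contradiction from the \emph{destroyed} triple $\{v_1,v_2,v_3\}\in T_3(G)$ that can no longer be connected in $H$. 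Similarly, $\FF_3$, $\FF_5$, and $\FF_6$ require two or three rounds of propagation alternating between forced edges and forced non-edges, and in $\FF_6$ the would-be diagonal $\{u,v_1,v_3\}$ (or $\{v,v_1,v_3\}$) contains a solid edge plus a dotted one, so you cannot certify it is disconnected in $G$ -- contrary to your claim that dotted edges never enter the verification.

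Concretely, two repairs are needed before your plan becomes a proof. First, enlarge the contradiction criterion: besides ``a forced pair of edges creates a triple not in $T_3(G)$,'' you must also allow ``a forced edge, by threatening to create such a triple, forces a non-edge, and the accumulated non-edges destroy a triple that is in $T_3(G)$,'' iterated more than once. Second, actually carry out the six case checks rather than asserting they are parallel; as written, four of the six would fail under your uniform scheme, which is precisely why the paper's proof of this lemma is a sequence of six distinct propagation arguments rather than one criterion applied six times.
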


\mycomment{
\begin{figure}
\centering
\begin{asy}
size(14cm);
dotfactor *= 1.5;

pen de = dashed;
real r = 0.25;
real k = 2.3;

void mainedge(picture pic, pair v, pair w) {
  draw(pic, v--w, red+1.5);
  pair v1 = v+dir(90)*dir(w-v)*r;
  pair v2 = v-dir(90)*dir(w-v)*r;
  pair w1 = w+dir(90)*dir(v-w)*r;
  pair w2 = w-dir(90)*dir(v-w)*r;
  filldraw(pic, v1--w2..(w+r*dir(w-v))..w1--v2..(v+r*dir(v-w))..v1--cycle,
    opacity(0.1)+yellow, red);
}

picture p1, p2, p3, p4, p5, p6, p7;
draw(p1, dir(90)--dir(210)--dir(330)--cycle--(k*dir(90)));
mainedge(p1, dir(90), dir(210));
dot(p1, dir(210));
dot(p1, "$v_2$", dir(330), dir(-90));
dot(p1, dir(90));
dot(p1, "$v_1$", k*dir(90), dir(90));
label(p1, "$u$", dir(90), dir(45)*2, red);
label(p1, "$v$", dir(210), dir(-90)*2, red);
add(p1);

draw(p2, dir(210)--dir(90)--dir(330));
draw(p2, dir(210)--dir(330), de);
mainedge(p2, dir(90), k*dir(90));
dot(p2, "$v_1$", dir(210), dir(-90));
dot(p2, "$v_2$", dir(330), dir(-90));
dot(p2, dir(90));
dot(p2, k*dir(90));
label(p2, "$u$", k*dir(90), dir(90)*2, red);
label(p2, "$v$", dir(90), dir(0)*2, red);
add(shift(3,0)*p2);

draw(p3, dir(90)--dir(180)--dir(270)--dir(0)--dir(90)--(k*dir(90)));
draw(p3, dir(180)--dir(0), de);
mainedge(p3, dir(180), dir(270));
dot(p3, "$v_3$", dir(0), dir(0));
dot(p3, "$v_2$", dir(90), dir(45));
dot(p3, dir(180));
dot(p3, dir(270));
dot(p3, "$v_1$", k*dir(90), dir(90));
label(p3, "$u$", dir(180), dir(180)*2, red);
label(p3, "$v$", dir(270), dir(-90)*2, red);
add(shift(6,0)*p3);

draw(p4, dir(90)--(2*dir(90))--(3*dir(90)));
draw(p4, dir(210)--dir(90)--dir(330));
draw(p4, dir(210)--dir(330), de);
mainedge(p4, 2*dir(90), 3*dir(90));
dot(p4, "$v_2$", dir(210), dir(-90));
dot(p4, "$v_3$", dir(330), dir(-90));
dot(p4, "$v_1$", dir(90), dir(0));
dot(p4, 2*dir(90));
dot(p4, 3*dir(90));
label(p4, "$u$", 3*dir(90), dir(0)*2, red);
label(p4, "$v$", 2*dir(90), dir(0)*2, red);
add(shift(9,-0.5)*p4);

draw(p5, dir(180)--dir(240)--dir(300)--dir(0));
mainedge(p5, dir(240), dir(300));
dot(p5, "$v_1$", dir(180), dir(90));
dot(p5, dir(240));
dot(p5, dir(300));
dot(p5, "$v_2$", dir(0), dir(90));
label(p5, "$u$", dir(240), dir(-90)*2, red);
label(p5, "$v$", dir(300), dir(-90)*2, red);
add(shift(12,0.5)*p5);

transform t = scale(1.2);
draw(p6, t*(dir(180)--dir(135)--dir(90)--dir(45)));
draw(p6, t*(dir(45)--dir(180)--dir(0)), de);
mainedge(p6, t*dir(45), t*dir(0));
dot(p6, t*dir(0));
dot(p6, t*dir(45));
dot(p6, "$v_3$", t*dir(90), dir(90));
dot(p6, "$v_2$", t*dir(135), dir(110));
dot(p6, "$v_1$", t*dir(180), dir(-90));
label(p6, "$u$", t*dir(45), dir(90)*2, red);
label(p6, "$v$", t*dir(0), dir(-90)*2, red);
add(shift(15,-0.5)*p6);

draw(p7, dir(-54)--dir(18)--dir(162)--dir(90));
draw(p7, dir(162)--dir(234));
draw(p7, dir(18)--dir(90), de);
mainedge(p7, dir(234), dir(306));
dot(p7, "$v_2$", dir(90), dir(90));
dot(p7, "$v_1$", dir(162), dir(132));
dot(p7, dir(234));
dot(p7, dir(306));
dot(p7, "$v_3$", dir(18), dir(48));
label(p7, "$u$", dir(234), dir(-90)*2, red);
label(p7, "$v$", dir(306), dir(-90)*2, red);
add(shift(18,0)*p7);

for (int i=1; i<=7; ++i) {
  label("$\mathcal{F}_" + (string)i + "$", (3*i-3,-2.5));
}
\end{asy}
\caption{The seven good families that force $uv$ to be necessary. Dotted edges are the ones whose existence we don't care about.}
\label{fig:lemma}
\end{figure}
}

\mycomment{
\begin{tikzpicture}
\draw[fill=black] (0,0) circle (1pt);
\draw[fill=black] (1,0) circle (1pt);
\draw[fill=black] (0.5, 0.7) circle (1pt);
\draw[fill=black] (0.5, 1.5) circle (1pt);
\node at (-0.3,0) {u};
\node at (0.2, 0.7) {v};
\node at (1.3,0) {$v_1$};
\node at (0.8, 1.5) {$v_2$};
\draw[thick] (0.5,0.7) -- (0,0) --  (1,0) -- (0.5,0.7) -- (0.5, 1.5);
\end{tikzpicture}\hfill
\break
}

\begin{proof}
We will show that in each of these 6 cases, deleting $uv$ would inevitably change the set of connected triples of $G$.

$\FF_1$: If we were to delete the edge $uv$, then we would need to add $v_1v$ as an edge for $\{u,v,v_1\}$ to remain a connected triple. But this would create a new connected triple $\{v_1, v, v_2\}$. So we need to delete $vv_2$. However, without the edges $uv$ and $vv_2$, $\{u,v,v_2\}$ will not be a connected triple.

$\FF_2$: If we were to delete the edge $uv$, then we would need to add both $uv_1$ and $uv_2$ as edges to preserve the connected triples $\{u,v,v_1\}$ and $\{u,v,v_2\}$. But this would inevitably create a new connected triple $\{v_1, u, v_2\}$.

$\FF_3$: If we were to delete the edge $uv$, then we would need to add $vv_2$ as an edge to preserve the connected triple $\{u, v, v_2\}$. To prevent creating the connected triple $\{v, v_1, v_2\}$, we would need to delete edge $v_1v_2$. But we know from $\FF_2$ that the edge $v_1v_2$ is a necessary edge here in $\FF_3$.

$\FF_4$: If we were to delete the edge $uv$, then we would need to add $uv_1$ to keep $\{u,v,v_1\}$ as a connected triple. However, since neither $\{u,v_1, v_2\}$ nor $\{u, v_1, v_3\}$ belongs to $T_3(G)$, we would need to delete both $v_1v_2$ and $v_1v_3$, which would destroy $\{v_1, v_2, v_3\}$.

$\FF_5$: If we were to delete the edge $uv$, we would need to add $vv_3$ as an edge for $\{v, u, v_3\} \in T_3(G)$. But since $\{u, v_3, v_2\} \notin T_3(G)$, we would need to delete $v_2v_3$. Yet $\{v_1, v_2, v_3\} \in T_3(G)$ implies that we also need to add $v_1v_3$ as an edge. Now, we have created a connected triple $\{v_1, v_3, v\} \notin T_3(G)$.

$\FF_6$: If we were to delete the edge $uv$, then we would need to add $vv_1$ and $uv_3$ as edges for $\{u,v,v_1\}, \{v, u, v_3\} \in T_3(G)$. But since $\{v,v_1,v_2\}, \{u,v_3,v_2\} \notin T_3(G)$, we would need to delete both $v_1v_2$ and $v_2v_3$. But then $\{v_1, v_2, v_3\} \notin T_3(G)$.

\end{proof}

With Lemma 4.8, we can give a complete characterization for triangle-free graphs. We first prove a very useful lemma.

 \begin{lem}
        Any edge that is not contained in any triangle in a graph $G$ on $n \geq 5$ vertices with the property that $N(v_1) \neq N(v_2)$ for any two non-adjacent vertices $v_1$ and $v_2$ is necessary.
    \end{lem}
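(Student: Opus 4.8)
The plan is to argue by contradiction: suppose $uv$ is \emph{not} necessary, so there is a graph $H$ with $uv\notin E(H)$ and $T_3(H)=T_3(G)$, and I will derive a contradiction from the hypotheses. The single most useful fact I would extract first concerns the triples through $u$ and $v$. Since $uv$ lies in no triangle of $G$, for $x\notin\{u,v\}$ the set $\{u,v,x\}$ is a connected triple of $G$ exactly when $x\in N(u)\cup N(v)$ (the vertex $x$ only has to hang off the edge $uv$), whereas in $H$, where $uv$ is absent, $\{u,v,x\}$ is connected exactly when $x$ is adjacent in $H$ to \emph{both} $u$ and $v$. Comparing the two descriptions under $T_3(H)=T_3(G)$ yields the key identity
\[ N_H(u)\cap N_H(v)\;=\;A:=\bigl(N(u)\cup N(v)\bigr)\setminus\{u,v\}. \]
I would then split on the degrees of the endpoints. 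If either endpoint, say $v$, has degree at least $3$, I do not even need $H$: triangle-freeness forces every vertex of $N(v)\setminus\{u\}$ to be non-adjacent to $u$, so any two of them, $v_1,v_2$, exhibit $\{u,v,v_1,v_2\}$ as an induced copy of the family $\FF_2$, and Lemma 4.8 certifies $uv$ necessary (if $\deg(u)\ge 3$ instead, reverse the roles). This reduces everything to $\deg(u),\deg(v)\le 2$; as $G$ is connected on $n\ge 5$ vertices, $u$ and $v$ cannot both be leaves, since then $uv$ would be an isolated $K_2$.

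\textbf{A leaf endpoint.} Suppose $\deg(u)=1$ with $N(u)=\{v\}$ and $\deg(v)=2$ with $N(v)=\{u,b\}$, so $A=\{b\}$ and $\{u,v,b\}$ is the \emph{only} triple containing $u$. Using the identity together with this uniqueness I would pin down $H$ locally: $b\in N_H(u)$, and any further $H$-neighbour $w\ne b$ of $u$ would make $\{u,b,w\}$ a triple of $H$ distinct from $\{u,v,b\}$, so $N_H(u)=\{b\}$; pushing the same bookkeeping one step further forces $N_H(b)=\{u,v\}$. Consequently \emph{every} triple of $H$ containing $b$ also contains $v$. But since $u$ is a leaf of $G$ and then $v$ is a leaf of $G-u$, the graph $G-\{u,v\}$ is connected on $n-2\ge 3$ vertices, so $b$ lies in some connected triple of $G$ that avoids both $u$ and $v$. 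That triple belongs to $T_3(G)=T_3(H)$ yet contains no $v$, the contradiction.

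\textbf{Both endpoints of degree two.} The remaining and hardest case is $N(u)=\{v,a\}$, $N(v)=\{u,b\}$ with $a\ne b$ and (by triangle-freeness) $a\not\sim v$, $b\not\sim u$, so $A=\{a,b\}$. First, as $a,b\in N_H(u)$, the triple $\{u,a,b\}$ is connected in $H$, hence in $G$; since $u\sim a$ but $u\not\sim b$, this forces $a\sim b$ in $G$. Next I would show $N_H(v)=\{a,b\}$ exactly: a spurious $w\in N_H(v)\setminus\{a,b\}$ would make $\{v,a,w\}$ a triple of $H$, but $\{v,a,w\}\in T_3(G)$ with $v\not\sim a$ forces $w\in N(v)\cap N(a)\subseteq\{u,b\}$, which is impossible. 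Now the neighbourhood hypothesis enters: $u\not\sim b$ while $N(u)=\{v,a\}\subseteq N(b)$ (because $b\sim v$ and $b\sim a$), so $N(u)\ne N(b)$ supplies a neighbour $c\in N(b)\setminus\{u,v,a\}$. Then $\{v,b,c\}\in T_3(G)=T_3(H)$, and since $c\notin N_H(v)=\{a,b\}$ this connectivity in $H$ must route through $b$, giving $c\sim_H b$; but then $\{u,b,c\}$ is connected in $H$ (as $b\in N_H(u)$), while in $G$ it is not a triple at all, because $u$ is adjacent to neither $b$ nor $c$. This contradiction finishes the case.

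\textbf{Where the difficulty lies.} The high-degree case is immediate from $\FF_2$, and the leaf case needs only connectivity; the real work is the two-degree-two configuration, which is precisely where both the $n\ge 5$ connectivity of $G$ and the hypothesis $N(v_1)\ne N(v_2)$ for non-adjacent pairs are indispensable. Indeed, without the neighbourhood hypothesis $u$ and $b$ could be non-adjacent twins with $N(u)=N(b)=\{v,a\}$, and then $uv$ would fail to be necessary, so I expect the extraction of the extra neighbour $c$ from that hypothesis to be the crux of the argument.
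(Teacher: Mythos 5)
Your proof is correct, but it takes a genuinely different route from the paper's. The paper works entirely inside $G$: it takes a connected triple $\{v_1,v_2,v_3\}$ through the edge, applies the hypothesis to the non-adjacent pair $v_1,v_3$, and runs a case analysis whose sole aim is to exhibit the edge inside an induced member of one of the families of Lemma 4.8 (it ends up using $\FF_1$ through $\FF_5$), so that Lemma 4.8 does all the work. You instead split on endpoint degrees: when some endpoint has degree at least $3$ you also land in Lemma 4.8 via $\FF_2$ (essentially one of the paper's sub-cases), but in the two low-degree configurations you argue directly against a hypothetical reconstruction $H$, the engine being the identity $N_H(u)\cap N_H(v)=(N(u)\cup N(v))\setminus\{u,v\}$ obtained by comparing the triples through both endpoints in $G$ and in $H$. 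I checked both of your direct cases --- the leaf case, where connectivity of $G-\{u,v\}$ on $n-2\ge 3$ vertices supplies a triple through $b$ avoiding $v$, and the degree-$(2,2)$ case, where $a\sim b$ is forced, $N_H(v)=\{a,b\}$ is pinned down, and the extra neighbour $c$ of $b$ yields the illegal triple $\{u,b,c\}$ --- and found no gaps. The paper's version buys uniformity, since the same six families reappear in the statement of Theorem 4.12 and necessity is always certified by induced-subgraph membership; yours is more self-contained and more transparent about where the hypotheses bite, since $N(v_1)\neq N(v_2)$ for non-adjacent pairs is used exactly once, in the degree-$(2,2)$ case, and your $C_4$ remark shows it cannot be dropped there. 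One cosmetic slip: you say ``triangle-freeness'' where you mean only that the single edge $uv$ lies in no triangle; the graph may well contain triangles elsewhere, though your argument only ever uses the weaker, correct fact.
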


    \begin{proof}
         Fix an edge $v_1v_2 \in E(G)$. We will show that $v_1v_2$ is necessary by showing that it is contained in an induced subgraph that belongs to one of the six families of graphs listed in Lemma 4.8. Since $G$ is connected, $v_1v_2$ exists in a connected triple, say $\{v_1, v_2, v_3\}$, with edges $v_1v_2$ and $v_2v_3$. Because $v_1$ and $v_3$ are non-adjacent, we know $N(v_1) \neq N(v_3)$. If there exists a vertex $v \in N(v_1) \setminus N(v_3)$, then the edge $v_1v_2$ exists in an induced subgraph $G[\{v,v_1, v_2, v_3\}]$ that belongs to $\FF_1$.

         Otherwise, there exists a vertex $v \in N(v_3) \setminus N(v_1)$ and $N(v_1) \subsetneq N(v_3)$. If $v_2v \in E(G)$, then $\{v_1, v_2, v_3, v\}$ would induce a graph that belongs to  $\FF_2$. If this is not the case, then $\{v_1, v_2, v_3, v\}$ would induce a $P_4$. Since $G$ is connected and $n \geq 5$, we know there exists $z \notin \{v_1, v_2, v_3, v\}$ that is adjacent to at least one of the vertices in $\{v_1, v_2, v_3, v\}$. If $v_1z \in E(G)$, then $N(v_1) \subsetneq N(v_3)$ would imply that $v_3z \in E(G)$ and that $\{v_1, v_2, v_3, v, z\}$ induces a graph in $\FF_3$. Else if $v_2z \in E(G)$, then $\{v_1, z, v_2, v_3\}$ induces a graph in $\FF_2$. Else if $v_3z \in E(G)$, then $G[\{v_1, z, v_2, v_3\}]$ belongs to $\FF_4$. Else, we know $vz \in E(G)$ and in which case $G[\{v_1, v_2, v_3, v, z\}]$ belongs to $\FF_5$.
    \end{proof}

\begin{thm}
    A triangle-free graph $G$ on $n \geq 5$ vertices is strongly $T_3$-reconstructible if and only if it has the the property that $N(v_1) \neq N(v_2)$ for any two non-adjacent vertices $v_1$ and $v_2$.
\end{thm}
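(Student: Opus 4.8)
The plan is to deduce the theorem directly from the characterization in Lemma 4.7 together with the necessity criterion in Lemma 4.9, so that essentially no new combinatorial work is required at this level. Recall that Lemma 4.7 asserts that $G$ is strongly $T_3$-reconstructible precisely when (1) every edge of $G$ is necessary, and (2) $N(v_1) \neq N(v_2)$ for every pair of non-adjacent vertices $v_1 \neq v_2 \in V(G)$. The strategy is to show that, under the triangle-freeness hypothesis, these two conditions collapse onto the single neighborhood condition appearing in the statement.

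For the forward implication, I would suppose that $G$ is strongly $T_3$-reconstructible. Then condition (2) of Lemma 4.7 holds verbatim, which is exactly the asserted property that $N(v_1) \neq N(v_2)$ for non-adjacent $v_1, v_2$. Nothing further is needed, and in fact triangle-freeness plays no role in this direction.

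For the converse, I would assume that $G$ is triangle-free on $n \geq 5$ vertices and satisfies $N(v_1) \neq N(v_2)$ for all non-adjacent $v_1 \neq v_2$. By Lemma 4.7 it then suffices to verify conditions (1) and (2). Condition (2) is precisely our hypothesis. For condition (1), I would observe that in a triangle-free graph no edge lies in any triangle; hence \emph{every} edge of $G$ meets the hypotheses of Lemma 4.9 --- an edge not contained in a triangle, in a graph on $n \geq 5$ vertices with the distinct-neighborhood property --- and is therefore necessary. With both requirements of Lemma 4.7 confirmed, $G$ is strongly $T_3$-reconstructible.

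The point at which all the real content sits is the invocation of Lemma 4.9: its proof is where the six forcing families of Lemma 4.8 are actually used to certify necessity, and where the hypotheses $n \geq 5$ and $N(v_1) \neq N(v_2)$ are genuinely needed. At the level of this theorem there is no serious obstacle; the only thing to be careful about is confirming that triangle-freeness guarantees that \emph{each} edge, and not merely some edge, is free of triangles, so that Lemma 4.9 applies uniformly across all of $E(G)$ rather than to a single fixed edge.
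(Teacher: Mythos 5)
Your proposal is correct and matches the paper's own argument essentially verbatim: both directions are reduced to Lemma 4.7, with the ``only if'' direction following from its second requirement and the ``if'' direction obtained by applying Lemma 4.9 to every edge, since triangle-freeness ensures no edge lies in a triangle. No substantive differences to report.
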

\begin{proof}
    Note the property in question forms a necessary and sufficient condition for meeting the second requirement in Lemma 4.7. So the ``only if" direction is immediate. We just need to show the ``if" direction (i.e. that satisfying this property also guarantees that every edge in G is necessary, which is the first requirement in Lemma 4.7).

    Applying Lemma 4.9 to $G$ gives us that every edge in $G$ is necessary. Thus $G$ is strongly $T_3$-reconstructible by Lemma 4.7.

\end{proof}

\begin{rem}
    Observe that Theorem 4.10, when applied to trees, is equivalent to saying that a tree on $n \geq 5$ vertices is strongly $T_3$-reconstructible if and only if no two leaves share the same parent.
\end{rem}

If we were to derive a complete characterization of all strongly $T_3$-reconstructible graphs, we would need a necessary and sufficient condition for edges that are contained in a triangle to be necessary. It turns out that the six scenarios, which force their highlighted edges to be necessary, respectively, shown in Lemma 4.8 are exactly what we need. Here we state and prove our full characterization of strongly $T_3$-reconstructible graphs.

\mycomment{
\begin{figure}
\centering
\begin{asy}
size(10cm);
dotfactor *= 1.5;

pen de = dashed;
real r = 0.15;
real k = 2.3;

void mainedge(picture pic, pair v, pair w) {
  draw(pic, v--w, red+1.5);
  pair v1 = v+dir(90)*dir(w-v)*r;
  pair v2 = v-dir(90)*dir(w-v)*r;
  pair w1 = w+dir(90)*dir(v-w)*r;
  pair w2 = w-dir(90)*dir(v-w)*r;
  filldraw(pic, v1--w2..(w+r*dir(w-v))..w1--v2..(v+r*dir(v-w))..v1--cycle,
    opacity(0.1)+yellow, red);
}

picture p1, p2, p3, p4, p5, p6, p7;
draw(p1, dir(90)--dir(210)--dir(330)--cycle--(k*dir(90)));
mainedge(p1, dir(90), dir(210));
dot(p1, dir(210));
dot(p1, dir(330));
dot(p1, dir(90));
// dot(p1, "$\alpha$", k*dir(90), dir(0));
dot(p1, k*dir(90));
label(p1, "$\mathcal{F}_1$", (0,-1.3), dir(-90));
add(p1);

draw(p2, dir(210)--dir(90)--dir(330));
draw(p2, dir(210)--dir(330), de);
mainedge(p2, dir(90), k*dir(90));
dot(p2, dir(210));
dot(p2, dir(330));
dot(p2, dir(90));
dot(p2, k*dir(90));
label(p2, "$\mathcal{F}_2$", (0,-1.3), dir(-90));
add(shift(3,0)*p2);

draw(p3, dir(90)--dir(180)--dir(270)--dir(0)--dir(90)--(k*dir(90)));
draw(p3, dir(180)--dir(0), de);
mainedge(p3, dir(180), dir(270));
dot(p3, dir(0));
dot(p3, dir(90));
dot(p3, dir(180));
dot(p3, dir(270));
dot(p3, k*dir(90));
label(p3, "$\mathcal{F}_3$", (0,-1.3), dir(-90));
add(shift(6,0)*p3);

draw(p4, dir(90)--(2*dir(90))--(3*dir(90)));
draw(p4, dir(180)--dir(90)--dir(0));
draw(p4, dir(180)--dir(0), de);
mainedge(p4, 2*dir(90), 3*dir(90));
dot(p4, dir(0));
dot(p4, dir(90));
dot(p4, dir(180));
dot(p4, 2*dir(90));
dot(p4, 3*dir(90));
label(p4, "$\mathcal{F}_4$", (0,-0.8), dir(-90));
add(shift(9,-0.5)*p4);

draw(p5, dir(180)--dir(240)--dir(300)--dir(0));
mainedge(p5, dir(240), dir(300));
dot(p5, dir(180));
dot(p5, dir(240));
dot(p5, dir(300));
dot(p5, dir(0));
label("$\mathcal{F}_5$", (0,-4.8), dir(-90));
add(shift(0,-3.5)*p5);

draw(p6, dir(180)--dir(135)--dir(90)--dir(45));
draw(p6, dir(45)--dir(180)--dir(0), de);
mainedge(p6, dir(45), dir(0));
dot(p6, dir(0));
dot(p6, dir(45));
dot(p6, dir(90));
dot(p6, dir(135));
dot(p6, dir(180));
label("$\mathcal{F}_6$", (4.5,-4.8), dir(-90));
add(shift(4.5,-4.3)*scale(1.4)*p6);

draw(p7, dir(-54)--dir(18)--dir(162)--dir(90));
draw(p7, dir(162)--dir(234));
draw(p7, dir(18)--dir(90), de);
mainedge(p7, dir(234), dir(306));
dot(p7, dir(90));
dot(p7, dir(162));
dot(p7, dir(234));
dot(p7, dir(306));
dot(p7, dir(18));
label("$\mathcal{F}_7$", (9,-4.8), dir(-90));
add(shift(9,-3.5)*p7);
\end{asy}
\caption{The seven thingies.}
\label{fig:main}
\end{figure}
}

\begin{figure}[htbp]
\centering
\includegraphics{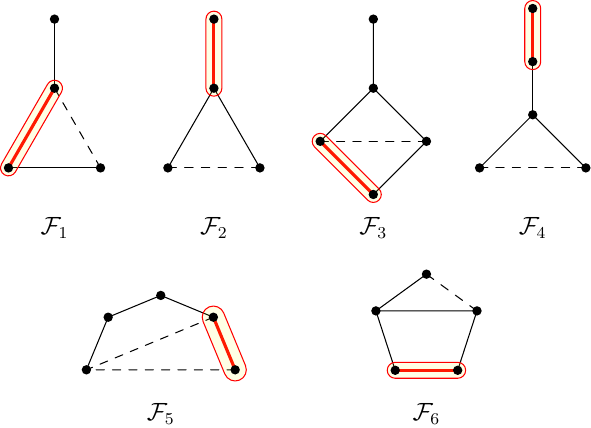}
\caption{The families of graphs mentioned in Theorem 4.12.}
\label{fig:main}
\end{figure}

\begin{figure}[hbtp]
\centering
\includegraphics{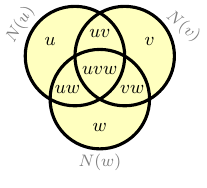}
\caption{The 2-coloring that represents the case where all three vertices of the triangle have at least one private neighbor, all pairs of them have at least one exclusive common neighbor, and there exists at least one vertex that is adjacent to all three vertices.}
\label{fig:venn}
\end{figure}

\begin{thm}
    A graph $G$ on $n \geq 5$ vertices is strongly $T_3$-reconstructible if and only if $N(v_1) \setminus \{v_2 \} \neq N(v_2) \setminus \{v_1 \}$ for all $v_1 \neq v_2 \in V(G)$, and every edge contained in a triangle is also contained in an induced subgraph belonging to one of the families of graphs in Figure~\ref{fig:main}:
\end{thm}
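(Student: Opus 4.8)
The plan is to derive the characterization from Lemma 4.7, which asserts that $G$ is strongly $T_3$-reconstructible exactly when every edge of $G$ is necessary and $N(v_1) \neq N(v_2)$ for every non-adjacent pair. First I would observe that the stated condition $N(v_1)\setminus\{v_2\}\neq N(v_2)\setminus\{v_1\}$ splits according to adjacency: for a non-adjacent pair it reads $N(v_1)\neq N(v_2)$, which is precisely the second requirement of Lemma 4.7, while for an adjacent pair it reads $N[v_1]\neq N[v_2]$, i.e.\ there are no ``true twins.'' I would then record the easy fact that the edge joining adjacent true twins is never necessary: deleting $v_1v_2$ leaves every triple $\{v_1,v_2,x\}$ connected, since each such $x$ adjacent to one of $v_1,v_2$ is adjacent to both, and the resulting graph is still connected (for $n\geq 5$ the twins must have a common neighbor). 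Hence no true twins is a necessary condition, and together with the non-adjacent case this shows that condition (A) packages the second clause of Lemma 4.7 together with the forbidding of the one obvious family of non-necessary edges.

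It remains to control which edges are necessary, and here I would separate edges lying in no triangle from those lying in a triangle. For the former, Lemma 4.9 shows that, once (A) holds, every triangle-free edge is automatically necessary, so such edges impose no extra condition. For the latter I would establish the equivalence that a triangle edge $uv$ is necessary if and only if it is contained in an induced subgraph belonging to one of the families of Figure~\ref{fig:main} (the configurations already appearing in Lemma 4.8). The ``if'' direction is supplied directly by Lemma 4.8. Assembling the pieces then gives both implications: if (A) and (B) hold, every triangle-free edge is necessary by Lemma 4.9 and every triangle edge is necessary by Lemma 4.8, while the non-adjacent clause of (A) supplies the second requirement of Lemma 4.7, so $G$ is strongly $T_3$-reconstructible; conversely, a strongly reconstructible $G$ forces (A) (the second clause of Lemma 4.7 plus the true-twin observation) and forces every triangle edge to be necessary, which via the ``only if'' direction yields (B).

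The main obstacle is the ``only if'' direction for triangle edges: showing that a triangle edge $uv$ avoiding all the families of Figure~\ref{fig:main} is \emph{not} necessary, by exhibiting a graph $H$ with $uv\notin E(H)$ and $T_3(H)=T_3(G)$. I would fix a triangle $uvw$ and organize the argument around the Venn partition of $N(u),N(v),N(w)$ in Figure~\ref{fig:venn}, whose cells are the private neighbors of each vertex, the exclusive common neighbors of each pair, and the common neighbors of all three. Deleting $uv$ breaks exactly the triples $\{u,v,x\}$ with $x$ a private neighbor of $u$ or of $v$, so constructing $H$ reduces to a bookkeeping of which compensating edges among these cells must be added to restore the broken triples and which must then be removed to suppress the triples those additions create. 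The content of the claim is that the configurations in which this bookkeeping cannot be made consistent are precisely the listed families; equivalently, if $uv$ lies in none of them, the cells are balanced enough that a consistent reassignment exists. I expect this to be a lengthy case analysis on which cells of the Venn diagram are non-empty, in each case either producing the explicit $H$ or locating one of the forbidden families as an induced subgraph, and this case bookkeeping is where the real difficulty lies.
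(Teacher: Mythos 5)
Your framework coincides with the paper's: the ``if'' direction and the easy half of the ``only if'' direction are assembled from Lemmas 4.7, 4.8 and 4.9 exactly as in the paper (including the observation that an edge between adjacent vertices with $N[v_1]=N[v_2]$ is never necessary, which is how the adjacent case of the neighborhood condition enters), and the paper's hard direction is likewise organized around the Venn diagram of $N(u)$, $N(v)$, $N(w)$ for a triangle $uvw$ containing the edge.

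The genuine gap is that the case analysis you correctly identify as ``where the real difficulty lies'' is never carried out, and it is essentially the entire content of the theorem. The paper first reduces the roughly $40$ symmetry classes of colorings to $12$ cases via two observations you would need to supply: if two of $u,v,w$ have private neighbors then the edge already sits in an $\FF_1$ configuration (a bull or bull-plus-edge), and the hypothesis $N(v_1)\setminus\{v_2\}\neq N(v_2)\setminus\{v_1\}$ eliminates the cases where two triangle vertices have identical neighborhoods outside the triangle. It then, in each of the $12$ surviving cases, proves structural facts (certain cells are forced to be singletons, the endpoints of each proposed replacement edge are forced to have identical neighborhoods) in order to exhibit an explicit $H$ with $T_3(H)=T_3(G)$. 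Without this work the ``only if'' direction for triangle edges is unproven. There is also a small inaccuracy in your bookkeeping that would derail the construction: deleting $uv$ breaks $\{u,v,x\}$ for every $x$ adjacent to exactly one of $u$ and $v$, which includes exclusive common neighbors of $\{u,w\}$ and of $\{v,w\}$, not only private neighbors of $u$ or $v$; several of the paper's cases (e.g.\ Case 1, where the only nonempty cells are exclusive common neighbors) hinge on exactly these triples, so the repair edges are not confined to the private-neighbor cells.
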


\noindent\emph{Proof of main theorem.}
    The ``if" direction follows directly from Lemma 4.7, Lemma 4.8, and Lemma 4.9. As for the ``only if" direction, it is clear that if there exists $v_1 \neq v_2 \in V(G)$ such that $N(v_1) \setminus \{v_2\} = N(v_2) \setminus \{v_1\}$, then whether $v_1$ and $v_2$ are adjacent would not make a difference to the set of connected triples of $G$. So if $G$ is strongly $T_3$-reconstructible, then it is definitely the case that $N(v_1) \setminus \{v_2 \} \neq N(v_2) \setminus \{v_1 \}$ for all $v_1 \neq v_2 \in V(G)$. What we need to show is that if an edge is contained in a triangle in a graph that satisfies $N(v_1) \setminus \{v_2 \} \neq N(v_2) \setminus \{v_1 \}$ for all $v_1 \neq v_2 \in V(G)$ but is not contained in an induced subraph that belongs to one of the families in Figure~\ref{fig:main}, then it is not necessary.

    To do that, we would take a triangle which our edge is contained in, say triangle $uvw$, and look at how it is connected to the rest of the graph. In particular, we will be interested in how the vertices outside of the triangle but in the neighborhood of $\{u,v,w\}$, which we denote as $N[\{u,v,w\}]$, are connected to the vertices $u, v, w$ respectively. A few words on notation and diction: we will call a vertex outside of the triangle an \emph{exclusive common neighbor} of two vertices of the triangle if said vertex is adjacent to the two vertices but not to the third vertex of the triangle. Since we will only be considering if two vertices have the same set of neighbors outside of themselves, when we write $N(x) \setminus N(y)$ for some adjacent vertices $x$ and $y$, we do not mean to include $y$. Similarly, we will use $N(x) = N(y)$ as short for $N(x)\setminus \{y\} = N(y) \setminus \{x\}$ when $x$ and $y$ are adjacent.

    We can express all such possibilities by all 2-colorings of the Venn diagram, where the regions colored yellow are exactly the ones that are non-empty, shown in Figure~\ref{fig:venn}, up to symmetry. Applying Burnside's lemma gives that there are $40$ scenarios. However, we first note that if there exist at least two vertices in $\{u, v, w\}$ that have private neighbors, say $x$ and $y$, where we say a vertex $u$ in the triangle has a \emph{private} neighbor if there is a vertex outside of the triangle that is adjacent to $u$ but not to $v$ or $w$, then we will find our edge and our triangle $uvw$ in an induced subgraph $G[\{v,u,w,x,y\}]$ that is the bull graph, or the bull graph plus an edge (triangle embedded in a corner of a pentagon), which we know are strongly $T_3$-reconstructible. Furthermore, in either case, our edge would exist in an induced subgraph that belongs to $\FF_1$, which would be a contradiction. On the other hand, the condition that $N(v_1) \setminus \{v_2 \} \neq N(v_2) \setminus \{v_1 \}$ for all $v_1 \neq v_2 \in V(G)$ we imposed on our graph would eliminate the cases where there exist two vertices of the triangle---say $u$ and $v$---that share all the same neighbors outside of the triangle $\left( \text{i.e. } N[\{u,v,w\}] \cap N(u) = N[\{u,v,w\}] \cap N(v) \right)$.

    This leaves us with 12 cases to consider, which we will break into two groups: the group of cases where none of the three vertices have a private neighbor (Cases 1 - 4), and the group of cases where there exists one of $\{u,v,w\}$, say $u$, with a private neighbor (Cases 5 - 12). What we want is to show that for any of the edges in the triangle in each case, if we assume that it is not contained in an induced subgraph that belongs to one of the families in Figure~\ref{fig:main}, then it would not be a necessary edge. \hfill
\break

\mycomment{
    \def\firstcircle{(90:0.7cm) circle (1 cm)}
  \def\secondcircle{(210:0.7cm) circle (1 cm)}
  \def\thirdcircle{(330:0.7cm) circle (1 cm)}
    \begin{tikzpicture}

    \begin{scope}[even odd rule]
            \clip \secondcircle (-3,-3) rectangle (3,3);
        \fill[yellow] \firstcircle;
    \end{scope}

      \begin{scope}
    \clip \secondcircle;
    \fill[yellow] \thirdcircle;
      \end{scope}

      \begin{scope}
    \clip \firstcircle;
    \fill[yellow] \thirdcircle;
      \end{scope}

      \draw \firstcircle node[text=black,above] {$A$};
      \draw \secondcircle node [text=black,below left] {$B$};
      \draw \thirdcircle node [text=black,below right] {$C$};
    \end{tikzpicture}
}

    \begin{wrapfigure}{l}{3.5cm}
       \includegraphics{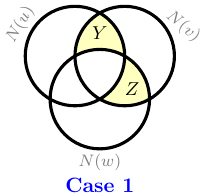}
    \end{wrapfigure}

    \maincase{Case 1} [the only non-empty sets are Y: set of exclusive neighbors of $u$ and $v$; Z: set of exclusive neighbors of $v$ and $w$]

    \begin{proof}
         By symmetry, we need to show $uv$ and $uw$ are not necessary if they are not contained in an induced subgraph that belongs to one of the families of \ref{fig:main}, respectively. Note that $Z = \{z\}$ otherwise $G[\{u,v,z, z_2\}], G[\{u,w,z,z_2\}] \in \FF_2$ for some $z_2 \neq z \in Z$, a contradiction. So the only edge we need to add to preserve $T_3(G)$ if we were to delete edge $uv$ is $uz$. For all $p \in N(z) \setminus N(u)$, if it is an exclusive common neighbor of $v$ and $w$, then $G[\{u,v,z,p\}], G[\{u,w,z,p\}] \in \FF_2$, a contradiction. Else, $p \notin N[\{u,v,w\}]$. But then we would have $G[\{p,z,v,u,w\}] \in \FF_3$, another contradiction. Hence such $p$ does not exist. On the other hand, $z$ has to be adjacent to all $y_i \in Y$, otherwise $G[\{u,y_i, v, z\}] \in \FF_1$ for some $y_i \in Y$, which would be a contradiction. Therefore $N(u) \subset N(z)$ and thus $N(u) = N(z)$. This shows that $uv$ is not necessary for it can be replaced by $uz$. As for $uw$, we would need to replace it with $uz$ and $yw$: in this case there does not exist another $y_2 \neq y \in Y$ as $G[\{y, y_2, u, w\}] \notin \FF_2$. We have shown that adding $uz$ will not cause trouble when our edge in question is $uv$ or $uw$. The argument for $yw$ is entirely symmetric.
    \end{proof}

    \begin{wrapfigure}{l}{3.5cm}
        \includegraphics{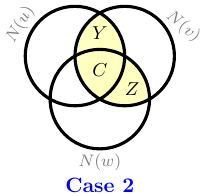}
    \end{wrapfigure}

    \maincase{Case 2}: [Case 1 +  C: the set of common neighbors of $u$, $v$, and $w$]

    \begin{proof}
        We can apply our argument in Case 1. We only need to check that the addition of $uz$ won't create the connected triple $\{u,c_l, z\}$ if $c_lz \notin E(G)$ for some $c_l \in C$. However, if there does exist $c_l \in C$ such that $c_lz \notin E(G)$, then $G[\{c_l, u, v, z\}], G[\{c_l, u, w, z\}] \in \FF_1$, a contradiction. This shows that $uv$ and, by symmetry, $vw$ is not necessary. For $uw$, we also need to check the same problem can't happen with the addition of $wy$. But the argument again is entirely symmetric.
    \end{proof}

    \begin{wrapfigure}{l}{3.5cm}
        \includegraphics{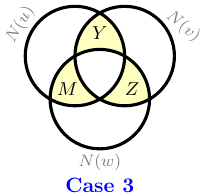}
    \end{wrapfigure}

    \maincase{Case 3}: [the only non-empty sets are Y: exclusive common neighbors of $u$ and $v$; Z: exclusive common neighbors of $v$ and $w$; M: exclusive common neighbors of $u$ and $w$]

    \begin{proof}
        We only need to show $uv$ is not necessary due to symmetry. Again, it's easy to check that both $Z$ and $M$ have only one element, say $z$ and $m$, and that $zy_i, my_i\in E(G)$ for all $y_i \in Y$. Also note $mz \in E(G)$ since $G[\{m,u,v,z\}] \notin \FF_1$. Deleting $uv$ would require the addition of $uz$ and $vm$. If there exists $p \in N(z) \setminus N(u)$. Then $p$ is either an exclusive common neighbor of $v$ and $w$ or $p \notin N[\{u,v,w\}]$. The former can't happen since $z$ is the unique exclusive common neighbor of $v$ and $w$ and if it were the latter then $G[\{u,w,z,p,v\}] \in \FF_3$, a contradiction. Hence $N(z) \subset N(u)$. On the other hand, having $zy_i, zm, zv, zw \in E(G)$ for all $y_i \in Y$ implies that $N(u) \subset N(z)$ and thus $N(u) = N(z)$. This shows that adding $uz$ as an edge does not create any new connected triples. Similarly, we can show that the same is true for $vm$.
    \end{proof}

    \begin{wrapfigure}{l}{3.5cm}
        \includegraphics{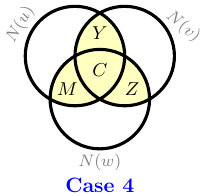}
    \end{wrapfigure}

    \maincase{Case 4}: [Case 3 + C: the set of common neighbors of $u$, $v$, and $w$]

    \begin{proof}
        Similar to Case 3, we will only need to show it for $uv$ (i.e. $uv$ is not necessary if it is not contained in an induced subgraph belonging to one of the families in \ref{fig:main}). As we have shown in Case 3, $u$ and $w$ have an unique exclusive common neighbor, say $m$, and $v$ and $w$ have an unique exclusive common neighbor, say $z$ and $mz \in E(G)$. It is sufficient to add $uz$ and $vm$ to make up for the connected triples destroyed from the deletion of $uv$. We will show that adding $uz$ does not create any new connected triples that were not in $T_3(G)$ by showing that $u$ and $z$ have the same set of neighbors. The argument for $vm$ is symmetric. If there exists $p \in N(z) \setminus N(u)$, then it has to be the case that $p \notin N[\{u,v,w\}]$, which would imply that $G[\{u,v,z,p,w\}] \in \FF_3$, a contradiction. On the other hand, recall $mz \in E(G)$ and that $zy_i, zc_l \in E(G)$ for all $y_i \in Y$ and $c_l \in C$, otherwise $G[\{u,v,y_i,z\}], G[\{u,v,c_l,x\}] \in \FF_1$ for some $c_l \in C$ or $y_i \in Y$, which would be a contradiction. Therefore $N(u) \subset N(z)$ and thus $N(u) = N(z)$.
    \end{proof}

    For all the cases where $u$ has a private neighbor, say $x_k$, with regards to the triangle $uvw$, both $uv$ and $uw$ are contained in the induced subgraph $G[\{x_k, u, w, v\}]$ in such a way that belongs to $\FF_1$, respectively. So the only edge in this triangle that might not be contained in an induced subgraph belonging to any of the families of \ref{fig:main} is edge $vw$. Hence we only check when our edge in question is $vw$ in Cases 5 to 12. So when we say some induced subgraph that $vw$ is in belongs to some family in \ref{fig:main}, it will always be implicitly assumed that it will be with regards to $vw$, where the $vw$ will correspond to the highlighted edge in the said family in \ref{fig:main}. Recall we are assuming that $vw$ is not contained in any induced subgraph that belongs to some family in  \ref{fig:main} and we want to show that $vw$ is not necessary. \hfill
    \break

    \begin{wrapfigure}{l}{3.5cm}
       \includegraphics{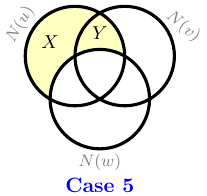}
        \par\vspace{1ex}
        \includegraphics{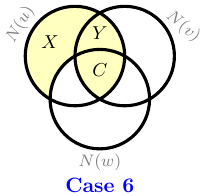}
    \end{wrapfigure}

    \maincase{Case 5}: [the only non-empty sets are X: set of private neighbors of $u$; Y: set of exclusive common neighbors of $u$ and $v$;]

    \maincase{Case 6}: [Case 5 +  C: the set of common neighbors of $u$, $v$, and $w$]
    \begin{proof}
         If there exists $y_i \neq y_j \in X$, then $G[\{y_i, v, y_j, w\}] \in \FF_2$, a contradiction. So $u$ and $v$ have an unique exclusive common neighbor $y$. Also, there exists an unique $x \in X$ such that $xy \in E(G)$. If we were to delete edge $vw$, the only connected triple destroyed would be $\{w,v,y\}$. So we would need to add $wy$ as an edge. If there exists $c_l \in C$ such that $c_ly \notin E(G)$, then $G[\{c_l,w,v,y\}] \in \FF_1$, a contradiction. Hence $N(w) \subset N(y)$. For any $p \in N(y) \setminus N(x)$, $p$ is either a private neighbor of $u$ or not in $N[\{u,v,w\}]$ at all. However, the former gives $G[\{u,w,v,y,p\}] \in \FF_3$, a contradiction. So $p$ would have to be the unique private neighbor of $u$ that is adjacent to $y$. This means that $\{w,y,p\}$ is the unique new connected triple created by the addition of $wy$ as an edge. Thus we delete the edge $yp$.

         We will show that $\{v,y,p\}$ is the only connected triple in $T_3(G)$ destroyed by deleting $yp$ and thus we would only need to add $vp$ as an edge to preserve $T_3(G)$. Suppose there exists $p_2 \neq v \in N(y) \setminus N(p)$. If $p_2 \notin N[\{u,v,w\}]$, then we have $G[\{p_2, y, u, w, v\}] \in \FF_3$, a contradiction. Else if $p_2$ is a private neighbor of $u$, then $G[\{p_2, p, y, v, w\}] \in \FF_2$, a contradiction. Else $p_2$ has to be a common neighbor of all $u$, $v$, and $w$, which would give $G[\{p, y, v, w, p_2\}] \in \FF_3$, another contradiction. Thus no such $p_2$ exists and $N(y) \setminus N(p) = \{v\}$. Now suppose there exists $q \neq y \in N(p) \setminus N(y)$. If $q \notin N[\{u,v,w\}]$ or $q$ is a private neighbor of $u$, then $G[\{q, p, y, v, w\}] \in \FF_5$, a contradiction. Otherwise, $q$ is a common neighbor of all three vertices, which gives $G[\{v, w, q, y\}] \in \FF_1$, which is a contradiction.

         Lastly, we will show that adding $vp$ does not create any more connected triples. Since $y$ is the unique exclusive common neighbor of $u$ and $v$, any $q \in N(v) \setminus N(p)$ would have to be a common neighbor of $u$, $v$, and $w$. But this would give $G[\{p,w,v,q\}] \in \FF_1$, a contradiction. On the other hand, suppose there exists $q \in N(p) \setminus N(v)$. We showed earlier that there does not exist any $q \neq y$ satisfying $qp \in E(G)$ but $qy \notin E(G)$. Thus this $q \in N(p) \setminus N(v)$ is also adjacent to $y$. If $q \notin N[\{u,v,w\}]$, then $G[\{q, y, u, w, v\}] \in \FF_3$, a contradiction. Otherwise, $q$ is a private neighbor of $u$, in which case we have $G[\{w,v,y,p,q\}] \in \FF_4$, which is a contradiction. Therefore, we have shown that $vw$ is not necessary in this case because deleting $vw$ and $yp$ while adding $yw$ and $vp$ preserves the set of connected triples.
    \end{proof}\hfill
    \break

    \begin{wrapfigure}{l}{3.5cm}
       \includegraphics{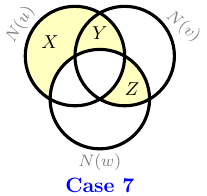}
        \par\vspace{1ex}
        \includegraphics{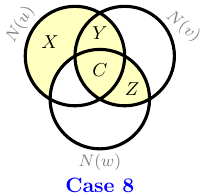}
    \end{wrapfigure}

    \maincase{Case 7} [the only non-empty sets are X: the set of private neighbors of $u$; Y: the set of exclusive common neighbors of $v$ and $u$; Z: the set of exclusive common neighbors of $v$ and $w$.]

    \maincase{Case 8} [Case 7 +  C: the set of common neighbors of $u$, $v$, and $w$].
    \begin{proof}
        First, note that $Y$ has only one element $y$, otherwise $G[\{y, y_i, v, w\}]$ belongs to $\FF_2$ (in Figure~\ref{fig:main}) for some $y_i \neq y \in Y$. If there exists $z_j \in Z$ such that $yz_j \notin E$, then $G[\{y, v, z_j, w\}]$ would be in $\FF_2$, a contradiction. So we assume that $yz_j \in E(G)$ for all $z_j$. Similarly, we have $yc_l \in E(G)$ for all $c_l \in C$.

        If we were to delete edge $vw$, we would need to add $wy$ as an edge to keep $\{v, w, y\}$ as a connected triple. Note that $N(w) \setminus N(v) = \emptyset$ and $N(v) \setminus N(w) = \{y\}$. This shows that deleting $vw$ does not destroy any more connected triples. Now we turn our attention to the addition of $wy$.

        Since all $c_l$ and $z_j$, as well as $u$ and $v$, are adjacent to $y$, we have that $N(w) \setminus N(y) = \emptyset$. Suppose there exists $p \in N(y) \setminus N(w)$. If $p$ is not in $N[\{u,v,w\}]$, then $G[\{p, y, u, v, w\}]$ would be in $\FF_3$, a contradiction. The fact that $Y$ has only one element $y$ and that $p \notin N(w)$ implies that $p$ has to be a private neighbor of $u$. If there exists $p_2 \neq p$ such that $p_2$ is also a private neighbor of $u$ and is adjacent to $y$, then $\{p, p_2, y, v, w\}$ would induce a subgraph that belongs to $\FF_4$, a contradiction. Therefore such $p$ is unique (i.e. $N(y) \setminus N(w) = \{p\}$).

        So we need to delete $py$. Observe that there does not exist $\Tilde{p} \in N(p) \setminus N(y)$ because $\{\Tilde{p}, p, y_i, v, w\}$ cannot induce a subgraph in $\FF_5$. Suppose there exists $\Tilde{p} \neq v \in N(y) \setminus N(p)$. If there exists $z_j \in Z$ or $c_l \in C$ such that $pz_j \notin E(G)$ or $pc_l \notin E(G)$, then said $z_j$ or $c_l$, together with $\{p, y, v, w\}$, would induce $\FF_3$, which is a contradiction. So $pz_j, pc_l \in E$ for all $z_j \in Z$ and all $c_l \in C$. This means that all common neighbours of $v$ and $w$ are contained in $N(p)$ and thus $\Tilde{p}$ is not a common neighbor of $v$ and $w$. Furthermore, $y$ is the unique exclusive common neighbor of $v$ and $u$. This implies that $\Tilde{p}$ is a private neighbor of $u$. However, then $\{ p, \Tilde{p}, y, v, w\}$ would induce a subgraph that belongs in $\FF_4,$ a contradiction. Hence the only vertex in $N(y)\setminus N(p)$ is $v$. To preserve the original connected triple $\{p, y, v\}$, we need to add $pv$ as an edge.

        Up until the addition of $pv$ as an edge, we have done everything needed to preserve the set of connected triples. Now we want to show that adding $pv$ doesn't create any new connected triples and thus no more changes are needed. We have shown previously that $N(p) \subset N(y)$. So if there exists any $p\prime \in N(p)$ that is not adjacent to $v$, which implies that it is also not adjacent to $w$, then $G[\{p, p\prime, y, v, w\}]$ would belong to $\FF_4$, a contradiction. Therefore everything in $N(p)$ must be a neighbor of $v$ and thus $N(p) \setminus N(v) = \emptyset$. On the other hand, $N(v) \setminus N(p)$ is empty as well since every neighbor of $v$ is either $y$ or a common neighbor of $v$ and $w$, all of which we've shown earlier are adjacent to $p$. Since $p$ and $v$ share all the same neighbors, adding $pv$ won't add any new connected triples.

    \end{proof}

    \begin{wrapfigure}{l}{3.5cm}
       \includegraphics{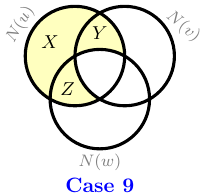}
        \par\vspace{1ex}
        \includegraphics{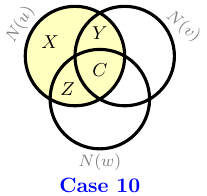}
    \end{wrapfigure}

    \maincase{Case 9}  [the only non-empty sets are
     X: the set of private neighbors of $u$; Y: the set of exclusive neighbors of $u$ and $v$; Z: the set of exclusive neighbors of $u$ and $w$]

   \maincase{Case 10} [Case 9 + C: the set of common neighbors of $u$, $v$, and $w$ not being empty]

    \begin{proof}
         Note that if there exists $z_i \neq z_j \in Z$, then $\{v, w, z_j, z_i\}$ would induce a graph that belongs to $\FF_3$, a contradiction. Therefore, we have $Z = \{z\}$ and, very similarly, $Y = \{y\}$. Assume $z$ and $y$ are adjacent, otherwise $vw$ will be contained in an induced subgraph $G[\{z, w, v, y\}]$ that belongs to $\FF_1$. Furthermore, assume $yc_l, zc_l \in E(G)$ for all $c_l \in C$, otherwise $\{y, v, w, c_{l_1}\}$ or $\{z, w, c_{l_2}, v\}$ would each induce a subgraph that belongs to $\FF_1$ for some $c_{l_1}, c_{l_2} \in C$.

         If we were to delete $vw$, the only edges we need to add are $vz$ and $wy$. We will show that none of these additions will create new connected triples by showing that the two vertices of any of the two edges have the same set of neighbors outside of themselves. By symmetry, it would be enough to show this for $vz$.

        Since $N(v) = Y \cup \{w\} = \{w,y\}$ and $zy, zw \in E(G)$, we have that $N(v) \setminus N(z) = \emptyset$. On the other hand, suppose there exists $p \in N(z) \setminus N(v)$. If $p$ is a private neighbor of $u$ with regards to the triangle $uvw$ or if $p \notin N[\{u,v,w\}]$, then $G[\{w,v,y,z,p\}]$ belongs to $\FF_6$, a contradiction. Then $p$ must be a common neighbor of $u$ and $w$ that is not adjacent to $v$. (i.e. $p \in Z$). But this would mean that $G[\{z, p, w, v\}]$ belongs to $\FF_2$, a contradiction. Thus $N(z) \setminus N(v) = \emptyset$. Therefore $N(z) = N(v)$.

        This shows that $vw$ is not necessary as replacing it with $wy$ and $vz$ does not change the set of connected triples. \hfill
        \break
    \end{proof}

    \begin{wrapfigure}{L}{3.5cm}
       \includegraphics{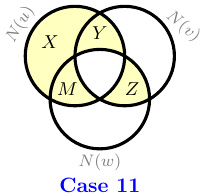}
        \par\vspace{1ex}
        \includegraphics{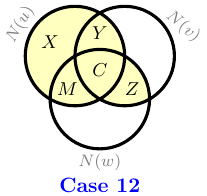}
    \end{wrapfigure}

    \maincase{Case 11} [the only non-empty sets are X: the set of private neighbors of $u$; Y: the set of exclusive neighbors of $U$ and $v$; Z: the set of exclusive neighbors of $v$ and $w$; M: the set of exclusive neighbors of $u$ and $w$]

    \maincase{Case 12} [Case 11 + C: the set of common neighbors of $u$, $v$, and $w$ not being empty]
        \begin{proof}
        If there exists $y_i \neq y_j \in Y $, then $\{y_i, y_j, v, w\}$ will induce a subgraph that belongs to $\FF_2$, a contradiction. So $Y = \{y\}$ and similarly $M = \{m\}$. Also assume $ym \in E(G)$ otherwise $G[\{m,w, v, y\}]$ belongs to $\FF_1$, which is a contradiction.

        Since $N(v) \setminus N(w) = \{y\}$ and $N(w) \setminus N(v) = \{m\}$, the only connected triples destroyed by deleting edge $vw$ are $\{w,v,y\}$ and $\{v,w,m\}$. So we need to add $wy$ and $mv$ as edges. We would like to show such additions do not create new connected triples by showing that the two vertices of either one of the two edges have the same set of neighbors outside of themselves. Again, it would be sufficient to check this for one of them, say $wy$, due to symmetry.

        Recall $ym \in E(G)$ and note that $G[\{y,v, w, z_j\}], G[\{y,v,w,c_l\}] \notin \FF_1$ implies that $yz_j, yc_l \in E(G)$ for all $z_j \in Z$ and $c_l \in C$. This shows that $N(w) \setminus N(y) = \emptyset$. On the other hand, suppose there exists $p \in N(y) \setminus N(w)$. Then either $p \notin N[\{u,v,w\}]$ or it is a private neighbor of $u$ (i.e. $p \in X$). However, the former would imply that $G[\{p,y,u,v,w\}] \in \FF_3$, a contradiction, whereas the latter $G[\{p,m,w,v,y\}] \in \FF_6$, which is also a contradiction. Therefore $N(w) = N(y)$.

        \end{proof}

This finishes the proof of the main theorem. $\blacksquare$

\section{Future Directions}
Given our complete characterization of strongly $T_3$-reconstructible graphs in Theorem 4.12, one might wonder whether the collection of graphs contained in the families listed in the statement is minimal; or what other equivalent characterizations there might exist. Naturally, one could also ask about characterizations of strongly $T_k$-reconstructible graphs for $k \geq 4$, which we can analogously define to be graphs that could be uniquely reconstructed from $T_k$ without any additional information. Furthermore, one could also study the necessary and sufficent conditions on a class of graphs for it to be $T_3$-reconstructible and try to extend it to an arbitrary $k$. We leave these questions for future work.

\section*{Acknowledgements}
This research was conducted at the 2023 University of Minnesota Duluth REU, funded by Jane Street Capital and the National Security Agency. The author would like to thank Noah Kravitz for suggesting the problem, Evan Chen for helping make the figures in the paper, and Yelena Mandelshtam and Andrew Kwon for providing invaluable feedback on the presentation of the paper. Last but certainly not least, the author is deeply grateful to Joe Gallian and Colin Defant for organizing the Duluth REU.
\hfill
\break

\end{document}